\documentclass[12pt]{amsart}
\usepackage{amsmath,latexsym, graphicx}
\usepackage[psamsfonts]{amssymb}
\usepackage{times}
\usepackage[mathcal]{euscript}
\usepackage{color}

\usepackage{amsfonts}
\usepackage{amssymb}
\usepackage{amsmath}
\usepackage{amsthm}
\usepackage{bm}
\usepackage[T1]{fontenc}
\usepackage[english]{babel}
\usepackage[bookmarks]{hyperref}
\numberwithin{equation}{section} \textwidth=140mm \textheight=200mm
\newcommand{\bbR}{\mathbb R}

\newcommand{\bbZ}{\mathbb Z}

\newcommand{\bbC}{\mathbb C}
\newcommand{\bbN}{\mathbb N}

\renewcommand{\epsilon}{\varepsilon}

\renewcommand{\epsilon}{\varepsilon}

\newcommand{\be}{\begin{equation}}
\newcommand{\ee}{\end{equation}}

\newcommand{\graph}{\mathrm{Graph}}

\newcommand{\R}{\mathbb{R}}

\newcommand{\cI}{{\mathcal I}}
\newcommand{\cJ}{{\mathcal J}}

\newcommand{\cQ}{{\mathcal Q}}
\newcommand{\cR}{{\mathcal R}}

\renewcommand{\Im}{{\ensuremath{\mathrm{Im}}}}

\newcommand{\dist}{{\ensuremath{\mathrm{dist}}}}

\newcommand{\tr}{\mathrm{tr}}

\renewcommand{\det}{\mathop{\mathrm{det}}}

\newcommand{\Dom}{\mathrm{Dom}}

{\bf}{\it}
\newtheorem{theorem}{Theorem}[section]
\newtheorem{lemma}[theorem]{Lemma}
\newtheorem{corollary}[theorem]{Corollary}

\newtheorem{definition}[theorem]{Definition}
\newtheorem{proposition}[theorem]{Proposition}

\newtheorem{remark}[theorem]{Remark}

\begin{document}

\small
\title[Virtual States  of a Jacobi  operator]{  Threshold 
Virtual States  of a Jacobi  operator}

\author{Saidakhmat N.~Lakaev, Konstantin  A. Makarov}

\address{Samarkand State University, 140104, Samarkand, Uzbekistan}
\email{slakaev@mail.ru}

\address{University of Missouri, Columbia, MO 63211, USA}
\email{makarovk@missouri.edu}

\begin{abstract}
 We prove that the set of parameters for which a virtual level appears at the edge of the continuous spectrum of a Jacobi matrix with a finite-rank diagonal perturbation constitutes an algebraic variety of codimension one. This variety partitions the parameter space into connected components, with their number determined by the size of the perturbation support.
We also  reveal a hierarchical structure underlying these critical varieties as the rank of the perturbation increases.
\end{abstract}

\maketitle
\section{Introduction}

We consider a discrete Schr\"odinger operator on the semi-infinite lattice $\mathbb{N}$,
 defined as a finite-rank diagonal perturbation of the Jacobi operator $J$,
 \begin{equation}\label{J}J=\begin{pmatrix}
2&-\sqrt{2}&0&0&\dots
\\-\sqrt{2}&2&-1&0&\dots
\\
0&-1&2&-1&\dots
\\\vdots&\vdots&\vdots&\vdots&\ddots
\end{pmatrix},
\end{equation}
 which models two spinless bosons with vanishing center-of-mass momentum (see, e.g., \cite{Valiente}). Small perturbations of $J$ can induce the appearance or disappearance of eigenvalues at the edges of the continuous spectrum of $J$, corresponding to the formation or loss of virtual states. The emergence of virtual levels at a threshold of the continuous spectrum is equivalent to the vanishing of the corresponding Jost function, which, for finite-rank perturbations, is a polynomial in several variables. The  study of the  geometry of the real affine algebraic variety defined by this Jost polynomial -- part of Hilbert's sixteenth problem -- forms the main focus of this work.
 We show that the left-threshold Jost function $C_n$ associated with the Jacobi operator
$J_n=J+P_nVP_n,$
where
 $P_n$ is the orthogonal projection onto the linear   span  of the first $n$  elements of the standard 
 basis $\{\delta_n\}_{n\in \bbN} $  in the Hilbert space $\ell^2(\mathbb{N})$, admits the representation 
$C_n = Q_n - Q_{n-1}$, 
where the polynomials $Q_k$ satisfy a three-term recurrence relation determined by the  magnitude of the perturbation  potential $V$  at the lattice  cells. This leads naturally to a hierarchy of affine varieties and  maps
\be\label{hier}
\dots \to \mathbf{V}(Q_n) \to \Dom(\Phi_n) \to \graph(\Phi_n) \to \mathbf{V}(Q_{n+1}) \to \dots,
\ee
generated by the rational function
  $$\Phi_n=Q_{n-1}/Q_n-2.$$ 
This framework allows for an
 inductive construction and a more detailed description of  the varieties ${\bf V}(Q_n)$ and ${\bf V}(C_n)$, respectively, as the $n$ increases The right-threshold case follows analogously via a symmetry principle for the Jost function.

Our approach yields a unified, explicit framework for tracking virtual states and their hierarchy in finite-rank perturbations of discrete Schr\"odinger operators: the variety $\mathbf{V}(C_n)$ partitions $\mathbb{R}^n$ into $n+1$ connected components, with each boundary crossing producing exactly one new bound state when the finite-rank potential is supported on $[1,\dots,n]$.

For background material on Jacobi operators,  Jost solutions and Jost functions and the related topics we refer to  the books \cite{Simon05I,Simon05II,Simon10,Teschl} and recent publications \cite{Yafaev17,Yafaev22}.

The paper organized as follows.

In Section 2, we review key properties of the perturbation determinant and the associated Jost functions for finite-rank perturbations $V$ of the Jacobi operator $J$. We derive explicit representations for their meromorphic continuation to the Riemann surface $\mathcal{R}$, which can be viewed as two copies of the complex plane joined along the cut corresponding to the absolutely continuous spectrum of $J$. Using a local parameter on $\mathcal{R}$ defined by the standard dispersion relation  (see eq. \eqref{guk}), we show that the Jost function becomes a polynomial, while the perturbation determinant becomes a rational function with at most two simple poles located at the spectral edges. The threshold behavior of the Jost function is then identified through the asymptotics of the perturbation determinant (see Corollary \ref{inz}).

Section 3 introduces the notion of a critical operator: $J+V$ is critical at a spectral threshold if an arbitrarily small perturbation of $V$ produces a new eigenvalue beyond that threshold. We prove that criticality at the left (resp. right) threshold occurs precisely when the corresponding threshold Jost function vanishes, and in this case the associated Jost solution generates a virtual state. The description for the right threshold follows analogously.

In Section 4, we analyze the geometry of the hierarchy \eqref{hier}, showing that the affine variety $\mathbf{V}(Q_n)$ associated with the polynomials $Q_n$ partitions $\mathbb{R}^n$ into $n+1$ connected components (Theorem \ref{decomm}, Corollary \ref{nuliQ}). As a consequence, the nodal surfaces of the threshold Jost function $C_n$ divide $\mathbb{R}^n$ into $n+1$ unbounded regions, $({\bf V}(C_n))^c = \bigcup_{k=0}^n G_k^{(n)}$ (Theorem \ref{main00}), and we present an efficient inductive algorithm for constructing these components.

Section 5 is auxiliary, providing results on the discrete spectrum of $J + tV$ in the large-coupling limit for the case where $V$ is the difference of two orthogonal projections.

In Section 6, we establish a precise spectral characterization: for a finite-rank potential $V$ supported on $[1,\dots,n]$, the operator $J+V$ has exactly $k$ eigenvalues below the lower threshold and $\ell$ eigenvalues above the upper threshold $(0 \le k+\ell \le n)$ if and only if the vector of potential strengths belongs to the intersection of $G_k^{(n)}$ with the spatial inversion of $G_\ell^{(n)}$, provided no threshold virtual states are present. Refinements in the presence of virtual levels are also discussed. 
 For a general discussion of the concept of virtual states in both the continuous and lattice cases, we refer, for example, to \cite{ALMM:2006,Yafaev79,Yafaev82}.

Finally, Appendix A provides concise self-contained proofs of mostly known results collected in Propositions \ref{propjost} and \ref{propdet}, which may be of independent interest. Proposition \ref{pams}, drawn from subspace perturbation theory, is included for completeness and ease of reference.

\section{The Jost function and perturbation determinant}
Consider a Jacoby operator $\cJ$  in $\ell^2(\bbN) $
 given by the matrix 
\be\label{JJ}
\mathcal{J}=\begin{pmatrix}
b_1&a_1&0&0&0&\dots\\
a_1&b_2&a_2&0&0&\dots\\
0&a_2&b_3&a_3&0&\dots\\
\dots&\dots&\dots&\dots&\dots&\dots
\end{pmatrix}
\ee
with
$$
\lim_{n\to\infty} a_n=-1\quad \text{and }\quad  \lim_{n\to\infty} b_n=-2.$$

 Recall (see, e.g., \cite{Killip, Yafaev22}) that
under the short range assumption 
$$
\sum_{n=1}^\infty|b_n-2|<\infty,
$$
a {\it Jost solution }  associated with $\cJ$ is defined as a  solution of the system of equations
\begin{equation}\label{JJJ}
a_{n-1}j_{n-1}+b_nj_n+a_nj_{n+1}=zj_n, \quad n=1,2,\dots
\end{equation}
for a sequence $\{j_n\}_{n=0}^\infty$
which is   asymptotic to   $\Theta^n(z)$ in the sense that 
$$
\lim_{n\to \infty} \Theta^{-n}(z) j_n(z)=1.
$$
Here $\Theta=\Theta(z)$  is a root of the  (dispersion) equation 
 \begin{equation}\label{guk}
\Theta+\frac{1}{\Theta}=2-z, \quad z\in \bbC\setminus [0,4],
\end{equation}
such that 
$$
|\Theta(z)|<1.
$$

The coefficient $a_0\ne 0$ in \eqref{JJJ} can  be  chosen at our convenience and  we set 
$$
 a_0=-\sqrt{2}.
 $$

Recall that  the value $j_0(z)$ of the Jost solution ``at zero'' is called the {\it Jost function}.
 
For a deeper discussion of the Jost functions/solutions and recent developments we refer to  \cite{DamanikI,DamanikII,Jensen,Yafaev79,Yafaev17,Yafaev18}.

For the free Jacobi matrix  given by \eqref{J}, in \eqref{JJJ} 
 we choose
 \footnote{ We note that the operator defined by the Jacobi matrix $J$ in the standard basis of the space $\ell^2(\bbN)$ is unitarily equivalent to the restriction of the discrete Laplacian  in  $\ell^2(\bbZ)$ to its invariant subspace of symmetric sequences.}
 $$
 b_k=2, \quad k=1, 2, \dots , 
 $$
and 
 $$
 a_1=-\sqrt{2}, \quad a_k=-1,\quad k>1.
 $$

Let $V$ be an arbitrary diagonal matrix
\begin{equation}\label{V}
V=\text{diag}[{v_1},v_2,\dots], \quad v_k\in \bbR, \quad k=1,2,\dots .
\end{equation}
Denote by  $J_n$ the  finite-rank diagonal perturbation of $J$ given by
\begin{equation}\label{jac}J_n=J+P_nVP_n,
\end{equation}
where
 $P_n$ is the orthogonal projection onto the linear   span  of the first $n$  elements of the standard 
 basis $\{\delta_n\}_{n\in \bbN} $  in the Hilbert space $\ell^2(\mathbb{N})$.

 \begin{proposition}\label{propjost}
 The Jost function $j_0^{(n)}(z)$ associated with the Jacoby operator $J_n$ \eqref{jac} can  explicitly be  represented 
 as \be\label{jn}
 j_0^{(n)}(z)=\frac12\Theta^n(q_n-\Theta q_{n-1}), \quad n\ge 1, \quad z\in \bbC\setminus[0,4],
 \ee
 where $\Theta=\Theta(z)$   is given by \eqref{guk}.
 Here  $q_n$ are  polynomials  in $z$ and $v_1, \dots, v_n$ satisfying the recurrence relations 
 \be\label{recq}
 q_{n+1}=(2-z+v_{n+1})q_{n}-q_{n-1}, \quad n\ge 1,
 \ee
 with the initial data
 \be\label{indat}
 q_0=2
 \quad \text{and}\quad 
 q_1=2-z+v_1.
 \ee
 
 Moreover,  the Jost function  $ j_{0}^{(0)}(z)$ associated with  the unperturbed Jacobi matrix $J_0=J$ is given by 
 \be\label{j00}
 j_{0}^{(0)}(z)=\frac{ 1-\Theta(z)^2}{2}.
 \ee
 
 \end{proposition}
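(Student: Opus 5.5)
The plan is to compare the Jost solution with a polynomial ``regular'' solution of \eqref{JJJ} by means of a Wronskian. Because $J_n$ coincides with $J$ beyond the $n$-th row, the Jost solution is explicit far out on the lattice, and the Wronskian then carries the information back to $j_0^{(n)}$.

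\emph{Step 1: the Jost solution far out.} The rows $k\ge n+1$ of $J_n$ are unperturbed. For $k\ge 3$ they read $-j_{k-1}+2j_k-j_{k+1}=zj_k$, and $\Theta^k$ solves this because $\Theta+\Theta^{-1}=2-z$. Hence for $n\ge 2$ we may take $j_k=\Theta^k$ for all $k\ge n$, which has the required normalization at infinity.

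The case $n=1$ (and $n=0$) needs care, because row $2$ contains $a_1=-\sqrt2$. Then $j_k=\Theta^k$ for $k\ge 2$, and row $2$ forces $j_1=\Theta/\sqrt2$. For $n=0$, row $1$ then gives
\[
j_0=\frac{(2-z)\Theta/\sqrt2-\sqrt2\,\Theta^2}{\sqrt2}=\frac{1-\Theta^2}{2},
\]
which is \eqref{j00}.

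\emph{Step 2: the regular solution.} Let $\varphi$ solve \eqref{JJJ} for $J_n$ with $\varphi_0=0$ and $\varphi_1=1$. I will check by induction that
\[
\varphi_{k+1}=q_k/\sqrt2 \quad (k\ge1), \qquad \varphi_1=q_0/2 .
\]
Row $1$ gives $\varphi_2=(2-z+v_1)/\sqrt2=q_1/\sqrt2$. Row $2$ gives $\varphi_3=(2-z+v_2)\varphi_2-\sqrt2\varphi_1$; multiplying by $\sqrt2$ yields $(2-z+v_2)q_1-q_0$, since $2\varphi_1=q_0$. Rows $k\ge3$ reproduce \eqref{recq} directly. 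This is exactly why the initial datum is $q_0=2$ rather than $1$.

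\emph{Step 3: Wronskian.} For two solutions $f,g$ of \eqref{JJJ} (with $a_0=-\sqrt2$), the quantity
\[
W_k(f,g)=a_k(f_kg_{k+1}-f_{k+1}g_k)
\]
is independent of $k\ge0$; this is the standard telescoping identity. At $k=0$ we get $W_0(j,\varphi)=-\sqrt2\,j_0$. At $k=n\ge2$, where $a_n=-1$ and $j_k=\Theta^k$, we get
\[
W_n(j,\varphi)=-\Theta^n\bigl(q_n-\Theta q_{n-1}\bigr)/\sqrt2 .
\]
Equating the two gives \eqref{jn}. For $n=1$ we evaluate at $k=1$, with $a_1=-\sqrt2$, $j_1=\Theta/\sqrt2$ and $j_2=\Theta^2$:
\[
W_1(j,\varphi)=-\sqrt2\bigl(\Theta q_1/2-\Theta^2\bigr),
\]
hence $j_0=\Theta(q_1-2\Theta)/2$, which again agrees with \eqref{jn}.

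The main obstacle I expect is bookkeeping the exceptional off-diagonal entry $a_1=-\sqrt2$ together with the convention $a_0=-\sqrt2$. It affects both the far-field form of the Jost solution when $n\le1$ and the normalization $q_0=2$. A mis-scaling at either place produces a spurious factor of $\sqrt2$, so the small cases $n=0,1,2$ must be checked separately before relying on the general induction.
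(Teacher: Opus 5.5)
Your proof is correct, but it takes a different route from the paper's.

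\textbf{The paper's route.} The paper writes the first $n+1$ equations of \eqref{JJJ}, together with $j_n=\Theta^n$ and $j_{n+1}=\Theta^{n+1}$, as an upper-triangular linear system for $(j_0,\dots,j_n)$. It solves for $j_0$ by Cramer's rule and identifies the two relevant minors with the tridiagonal determinants \eqref{poli}, i.e.\ the $k\times k$ truncations of $J+V-z$. The recurrence \eqref{recq} then comes from expanding along the last row. The case $n=2$ is solved by hand, and $n=1$ is obtained by specializing $j_0^{(1)}=j_0^{(2)}|_{v_2=0}$.

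\textbf{Your route.} You introduce the regular solution $\varphi$ with $\varphi_0=0$, $\varphi_1=1$, and identify $q_k=\sqrt2\,\varphi_{k+1}$ for $k\ge1$. You then read off $j_0$ from the constancy of $W_k(j,\varphi)$ between $k=0$ and $k=n$. I checked the telescoping identity, the values $j_1=\Theta/\sqrt2$ and $\varphi_2=q_1/\sqrt2$, and both Wronskian evaluations; they are right.

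\textbf{What each approach buys.} Your approach handles all $n\ge2$ at once, with only $n=0,1$ treated separately. It also explains the otherwise ad hoc initial value $q_0=2$ as $2\varphi_1$, forced by $a_0=a_1=-\sqrt2$. Finally, it exhibits $j_0$ as a multiple of the Wronskian of the Jost and regular solutions. That is exactly the mechanism behind the correspondence between zeros of the Jost function and eigenvalues, which is invoked without proof in Theorem~\ref{mu11}. The paper's approach instead yields directly the determinant interpretation of $q_k$ as truncated determinants of $J+V-z$. That form is reused in the proof of Proposition~\ref{propdet}, where the $(n,n)$ cofactor is identified with $q_{n-1}$.

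One point is left implicit in both arguments: the Jost solution is uniquely fixed by its asymptotics. This holds because for $z\notin[0,4]$ the free equation has solutions spanned by $\Theta^k$ and $\Theta^{-k}$ with $|\Theta|<1$. It is what justifies setting $j_k=\Theta^k$ for $k\ge\max\{2,n\}$.
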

 
 \begin{proof} See Appendix  \ref{A1}.
 \end{proof}

\begin{remark} Notice that the chosen contracting branch of the function $\Theta(z)$ conformally maps the complement
$(\mathbb{C} \cup \{\infty\}) \setminus [0,4]$
onto the interior of the unit disk. The points $\Theta = \pm 1$ correspond to the threshold values of the spectral parameter $z$: the lower edge of the continuous spectrum at $z = 0$ corresponds to $\Theta = +1$, while the upper edge at $z = 4 $ corresponds to $\Theta = -1$.

It is also worth noting that the functions $z$ and $\Theta(z)$ admit a  meromorphic continuation to a Riemann surface $\mathcal{R}$, obtained as the double of the complex plane with the cut
$\mathbb{C} \cup \{\infty\} \setminus [0,4]$,
by gluing two copies of the plane crosswise along the cut. On this Riemann surface $\mathcal{R}$, the meromorphic function $\Theta$ has a single zero on the first sheet and a single pole on the second sheet, whereas the function $z$ has one zero and one pole on each sheet. Moreover, the meromorphic  function   $z$ is a rational function of $\Theta$, and the corresponding functional relation \eqref{guk} is commonly referred to as the dispersion relation.

 In this context, we remark that Proposition \ref{propjost} allows us to regard the Jost function
$j_0^{(n)}(\cdot)$, associated with the Jacobi operator $J_n$, as a meromorphic function on the entire Riemann surface $\mathcal{R}$.
Taking into account the conformal change of variables \eqref{guk}, instead of working with the function $j_0^{(n)}(z)$ on $\mathbb{C}\setminus[0,4]$, is  convenient to  consider the Jost function $j_0^{(n)}(\Theta)$ as a function of the local parameter $\Theta\in \bbC$.
\end{remark}

 As a corollary of Proposition \ref{propjost} , we present a statement which asserts that the Jost function, as a function of the local parameter $\Theta$ on the Riemann surface $\cR$, is a polynomial.

 \begin{corollary}\label{teit}

 The Jost function $j_0^{(n)}(\Theta)$, associated with the Jacobi operator $J_n$, is a polynomial in the local parameter
 $  \Theta $ of degree $ 2n-1$  when $ v_1, \dots, v_n $  are fixed, and a polynomial in $ v_1, \dots, v_n $ of degree $ n $ when $ \Theta$  is fixed. It admits the representation
\be\label{figa}j_0^{(n)}(\Theta) = \frac{\cQ_n - \Theta^2 \cQ_{n-1}}{2},\quad \Theta\in \bbC,
\ee
where $ \cQ_n$  are polynomials in $ v_1, \dots, v_n$  of degree $ n $  satisfying the recurrence relation
$$\cQ_n = (1 + v_n \Theta + \Theta^2) \cQ_{n-1} - \Theta^2 \cQ_{n-2},
$$
with initial conditions
$$
\cQ_0 = 2, \quad \cQ_1 = 1 + v_1 \Theta + \Theta^2.
$$

Moreover, the following  Jost function  Symmetry Principle holds:
\begin{equation}\label{symm}
j_0^{(n)}(V;\Theta) = j_0^{(n)}(-V; -\Theta),
\end{equation}
where we have used the notation  $j_0^{(n)}(V;\Theta)$ for the Jost function  to explicitly indicate the dependence of the Jost function on the interaction potential $V$.
 
In particular, 
\be\label{por}
j_0^{(1)}(\pm1)=\pm\frac12v_1.
\ee

\end{corollary}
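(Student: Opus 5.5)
The plan is to substitute the dispersion relation into the formula \eqref{jn} of Proposition \ref{propjost} and clear denominators. Since $2-z=\Theta+\Theta^{-1}$, the recurrence \eqref{recq} becomes
$$q_{k+1}=(\Theta+\Theta^{-1}+v_{k+1})q_k-q_{k-1},$$
with initial data $q_0=2$ and $q_1=\Theta+\Theta^{-1}+v_1$.

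The natural normalization is $\cQ_k:=\Theta^k q_k$. Multiplying the recurrence by $\Theta^{k+1}$ gives
$$\cQ_{k+1}=(1+v_{k+1}\Theta+\Theta^2)\cQ_k-\Theta^2\cQ_{k-1}.$$
For the initial data, $\cQ_0=2$ and $\cQ_1=1+v_1\Theta+\Theta^2$. This is exactly the recurrence and the initial conditions stated in the corollary.

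Now \eqref{jn} reads
$$j_0^{(n)}=\tfrac12\bigl(\Theta^n q_n-\Theta^{n+1}q_{n-1}\bigr)=\tfrac12\bigl(\cQ_n-\Theta^2\cQ_{n-1}\bigr),$$
which is \eqref{figa}. So $j_0^{(n)}$ is a polynomial in $\Theta$ and in $v_1,\dots,v_n$. The identity is first obtained for $|\Theta|<1$, $\Theta\neq0$, and then taken as the definition of the continuation to all $\Theta\in\bbC$, which is consistent with the earlier Remark.

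\textbf{Degrees.} By induction, $\cQ_k$ has degree $k$ in $(v_1,\dots,v_k)$, with top term $\Theta^k v_1\cdots v_k$. Since $\cQ_{n-1}$ has degree $n-1$, the $v$-degree of $j_0^{(n)}$ is $n$.

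The $\Theta$-degree requires more care, and I expect this to be the main obstacle. A naive induction gives $\deg_\Theta\cQ_k\le 2k$. Writing $\cQ_k=\Theta^kq_k$ with $q_k$ a Laurent polynomial in $\Theta$ that is symmetric under $\Theta\mapsto\Theta^{-1}$ for $k\ge1$, we get $\deg\cQ_k=2k$ exactly: the leading coefficient $c_k$ of $\Theta^{2k}$ satisfies $c_{k+1}=c_k$, and $c_1=1$.

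The difference $\cQ_n-\Theta^2\cQ_{n-1}$ then has both terms of degree $2n$, and their top coefficients cancel because $c_n=c_{n-1}$ for $n\ge2$. For $n=1$ we have $c_0=2$ but $\cQ_0$ has degree $0$, so the terms have different degrees. To pin the exact degree, I will compare the next coefficients. Write the $\Theta^{2k-1}$ coefficient of $\cQ_k$ as $s_k=v_1+\dots+v_k$; by induction, $s_{k+1}=s_k+v_{k+1}$. The $\Theta^{2n-1}$ coefficient of the difference is then $s_n-s_{n-1}=v_n$. Hence the degree is $2n-1$ whenever $v_n\neq0$, which is the generic case. I will state the degree claim in this generic sense; if $v_n=0$, the polynomial reduces to the one for $n-1$.

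\textbf{Symmetry.} Replacing $(V,\Theta)$ by $(-V,-\Theta)$ leaves each factor $1+v_k\Theta+\Theta^2$ and each $\Theta^2$ unchanged, as well as the initial data. By induction every $\cQ_k$ is invariant, and therefore so is $j_0^{(n)}$. This proves \eqref{symm}.

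\textbf{Case $n=1$.} Here $j_0^{(1)}=\tfrac12\bigl(1+v_1\Theta+\Theta^2-2\Theta^2\bigr)=\tfrac12\bigl(1+v_1\Theta-\Theta^2\bigr)$. Evaluating at $\Theta=\pm1$ gives $\pm\tfrac12v_1$, which is \eqref{por}.
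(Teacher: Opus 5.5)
Your derivation of \eqref{figa} is correct: the substitution $\cQ_k=\Theta^kq_k$ with $2-z=\Theta+\Theta^{-1}$ gives the recurrence, the initial data and the representation. The paper leaves this step implicit; its proof covers only the symmetry principle and \eqref{por}. For those two parts you use exactly the paper's arguments: induction on $\cQ_k$ using that $1+v_k\Theta+\Theta^2$, $\Theta^2$ and the initial data are invariant under $(V,\Theta)\mapsto(-V,-\Theta)$, and direct evaluation at $n=1$. The paper says nothing about degrees, so that part is your own, and it contains one wrong step.

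For $n=1$, the terms $\cQ_1=1+v_1\Theta+\Theta^2$ and $\Theta^2\cQ_0=2\Theta^2$ do \emph{not} have different degrees. Both have degree $2$, with leading coefficients $1$ and $2$, which do not cancel. Hence $\deg_\Theta j_0^{(1)}=2$ for every $v_1$, as your own formula $\tfrac12(1+v_1\Theta-\Theta^2)$ shows. Your conclusion that the degree is $2n-1$ whenever $v_n\neq0$ is therefore false at $n=1$. The comparison of $\Theta^{2n-1}$-coefficients ($s_n-s_{n-1}=v_n$) only makes sense for $n\ge2$, where the $\Theta^{2n}$-coefficients $c_n=c_{n-1}=1$ actually cancel.

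The correct statement is: degree $2$ for $n=1$, and for $n\ge2$ degree $2n-1$ if and only if $v_n\neq0$. The corollary's unconditional ``degree $2n-1$'' is inaccurate, and you should flag that rather than assert it, even generically, for $n=1$.

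A smaller caveat of the same kind: the $v$-degree equals $n$ only for fixed $\Theta\neq0$. The top term is $\tfrac12\Theta^n v_1\cdots v_n$, and at $\Theta=0$ one has $j_0^{(n)}(0)=\tfrac12\cQ_n(0)=\tfrac12$, which is constant in $v$.
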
\label{symremark}

\begin{proof}
The symmetry relations
\be\label{qsi}
\cQ_n(V,\Theta)=\cQ_n(-V,-\Theta)
\ee
obviously hold for $n=0,1$. By induction, using the recurrence relation for the polynomials $\cQ_n$, we see that \eqref{qsi}
 also holds for all $n \ge 2$
for the combination $1+v_n\Theta+\Theta^2$ is unchanged under the simultaneous transformation $(V,\Theta)\mapsto(-V,-\Theta)$. Combining \eqref{qsi} and \eqref{figa} implies \eqref{symm}.

To prove \eqref{por}, note that
\begin{align*}
j_0^{(n)}(\Theta)
&=\frac{\cQ_1-\Theta^2\cQ_0}{2}
=\frac{1+v_1\Theta+\Theta^2-2\Theta^2}{2}
\\
&=\frac{1+v_1\Theta-\Theta^2}{2},
\end{align*}
which yields \eqref{por}.
\end{proof}

\begin{remark}
Notice that  the indicated  Jost function  Symmetry Principle reflects  the equivalence of forward and backward lattice propagation and guarantees that the analytic structure of the Jost function is compatible with the two-sheeted Riemann surface of the dispersion relation \eqref{guk}.
\end{remark}

 Along with the Jost function, the perturbation determinant can be considered as a function of the  local parameter $\Theta$ on the two-sheeted Riemann surface $\cR$. In this parametrization, the determinant  becomes a rational function 
in $\Theta$,  and, in the generic case, has two simple poles at the points $+1$ and $-1$, corresponding to the threshold values $\lambda=0$ and $\lambda=4$ of the spectral parameter.

Note, however, that for special values of the interaction parameters, the corresponding pole-type singularities may be absent.

We present the corresponding result and provide a short proof in the Appendix.

\begin{proposition}\label{propdet} The perturbation determinant $
{\det}_{J_n/J}$ associated with the pair $(J_n,J)$ of Jacobi matrices  \eqref{jac} admits the representation
\be\label{vita}
{\det}_{J_n/J}\left (2-\Theta-\frac1\Theta\right )=\frac{j_0^{(n)}(\Theta)}{j_0^{(0)}(\Theta)},\quad \Theta\in \bbC\setminus\{-1,1\},
\ee
 where $ j_0^{(0)}$ is given by 
 $$ j_0^{(0)}(\Theta)=\frac{ 1-\Theta^2}{2}.
 $$
\end{proposition}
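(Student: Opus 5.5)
The plan is to compute the perturbation determinant directly as a finite determinant, and then compare it with the formula for $j_0^{(n)}$ given in Proposition \ref{propjost}.

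\textbf{Reduction to a finite determinant.} Since $P_nVP_n$ has rank at most $n$, we have, for $z\in\bbC\setminus[0,4]$,
$$
{\det}_{J_n/J}(z)=\det\bigl(I+P_nVP_n(J-z)^{-1}\bigr)=\det\bigl(I_n+V_nG_n(z)\bigr).
$$
Here $V_n=\mathrm{diag}[v_1,\dots,v_n]$ and $G_n(z)$ is the $n\times n$ block $P_n(J-z)^{-1}P_n$ of the resolvent.

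\textbf{Step 1: the determinant is rational in $\Theta$ with $n$-dependent denominator $1-\Theta^2$.} I will avoid computing $G_n$ entrywise. Instead I write $\det(I_n+V_nG_n)=\det(T_n+V_n)/\det T_n$, where $T_n=G_n^{-1}$. By the Schur complement formula, $T_n=(J-z)_{n\times n}-E$, with $E$ a rank-one correction at the $(n,n)$ entry. This correction comes from the tail: it equals $a_n^2$ times the $(1,1)$ resolvent entry of the free tail, where the tail is the standard free Jacobi matrix with entries $2,-1$. That entry is the well-known value $-\Theta$. Hence $T_n$ is the $n\times n$ truncation of $J-z$ with the last diagonal entry $2-z$ replaced by $2-z-\Theta$ (for $n=1$, with $a_1^2=2$ taken into account appropriately).

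Expanding $\det(T_n+V_n)$ along the last row gives
$$
\det(T_n+V_n)=\tilde q_n-\Theta\,\tilde q_{n-1},
$$
where $\tilde q_k$ are the leading principal minors of $(J+V-z)$. These satisfy the three-term recurrence \eqref{recq}, with the off-diagonal $-\sqrt2$ producing the factor $2$ in the first step. The normalization $\tilde q_k=q_k/2$ or similar needs to be checked against \eqref{indat}. Carrying the factor correctly, this should give $\det(T_n+V_n)=c\,(q_n-\Theta q_{n-1})$ for a constant $c$ independent of $V$.

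\textbf{Step 2: identify the denominator.} Setting $V=0$ in the same identity gives $\det T_n=c\,(q_n^{0}-\Theta q_{n-1}^{0})$. On the other hand, Proposition \ref{propjost} gives $j_0^{(n)}=\tfrac12\Theta^n(q_n-\Theta q_{n-1})$ and $j_0^{(0)}=(1-\Theta^2)/2$. The unperturbed case of \eqref{jn} must agree with $j_0^{(0)}$, because $V=0$ means $J_n=J$ and the Jost solution is unique. This yields
$$
\tfrac12\Theta^n\bigl(q_n^0-\Theta q_{n-1}^0\bigr)=\frac{1-\Theta^2}{2}.
$$
Dividing, the constants $c$ and the factors $\Theta^n/2$ cancel, and we obtain
$$
{\det}_{J_n/J}=\frac{q_n-\Theta q_{n-1}}{q_n^0-\Theta q_{n-1}^0}=\frac{j_0^{(n)}}{j_0^{(0)}}.
$$
This holds first for $|\Theta|<1$, i.e.\ $z\notin[0,4]$. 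The excluded points $\Theta=\pm1$ are exactly the zeros of the denominator, so the identity \eqref{vita} then extends to $\Theta\in\bbC\setminus\{-1,1\}$ by taking it as the definition of the meromorphic continuation.

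\textbf{Main obstacle.} The delicate point is the bookkeeping in Step 1: the special first bond $-\sqrt2$ and the Schur complement correction for the tail. I would first verify the formula for $n=1$. In that case the $(1,1)$ entry of $(J-z)^{-1}$ is computable from the Jost solution of $J$, namely $-j_1^{(0)}/(a_0 j_0^{(0)})$. This gives $1+v_1G_{11}$, to be checked against $(1+v_1\Theta-\Theta^2)/(1-\Theta^2)$ from the proof of Corollary \ref{teit}. After that the general case follows by the last-row expansion.

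An alternative route, as a fallback, is the classical identity for one-sided Jacobi operators with common tail. One would show that the ratio $j_0^{(n)}/j_0^{(0)}$ and the determinant are both analytic, tend to $1$ as $z\to\infty$, and have the same zeros (the eigenvalues) and logarithmic derivatives (trace formula $\tr\,(J_n-z)^{-1}-(J-z)^{-1}$). That argument is heavier, so the direct finite-determinant computation is preferred.
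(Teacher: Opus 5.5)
Your argument is correct, but it is organized differently from the paper's.

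\textbf{The paper's route.} The paper never forms the $n\times n$ block of the resolvent. It builds $J_n$ from $J$ by successive rank-one steps $J_{k-1}\to J_k$. Each factor ${\det}_{J_k/J_{k-1}}=1+v_k((J_{k-1}-z)^{-1}\delta_k,\delta_k)$ is computed by solving a tridiagonal system whose last diagonal entry is $2-z-\Theta$, which gives $\Theta(q_k-\Theta q_{k-1})/(q_{k-1}-\Theta q_{k-2})$. The paper then telescopes using multiplicativity of perturbation determinants, and the $n=1$ factor supplies $j_0^{(0)}=(1-\Theta^2)/2$.

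\textbf{Your route.} You reduce at once to $\det(I_n+V_nG_n)=\det(T_n+V_n)/\det T_n$, where $T_n$ is the Schur complement. This is the same truncated matrix with the boundary term at site $n$, so the whole computation becomes a single cofactor expansion. You then get the denominator from the $V=0$ case of Proposition \ref{propjost}, or equivalently from $q_k^0=\Theta^k+\Theta^{-k}$.

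\textbf{What each buys.} Your version needs no telescoping. It also never divides by the intermediate quantities $q_{k-1}-\Theta q_{k-2}$, which vanish at eigenvalues of $J_{k-1}$; the paper's chain is only valid off those points, which is why it works with $\Im z\neq 0$. You divide only by $\det T_n=\Theta^{-n}(1-\Theta^2)$, which is nonzero for $0<|\Theta|<1$. The paper's version, in turn, uses only the elementary rank-one formula. It needs neither the Schur complement nor $\det(I+AB)=\det(I+BA)$.

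\textbf{Two bookkeeping corrections.} Neither affects the conclusion.
\begin{itemize}
\item The $(1,1)$ resolvent entry of the free tail is $+\Theta$, not $-\Theta$; it is positive for $z<0$. With your convention $T_n=(J-z)_{n\times n}-E$, only $+\Theta$ produces the (correct) last entry $2-z-\Theta$.
\item The leading principal minors satisfy $\tilde q_k=q_k$ for all $k\ge1$, not $q_k/2$. The value $q_0=2$ is exactly what encodes $a_1^2=2$. Hence $\det(T_n+V_n)=q_n-\Theta q_{n-1}$, i.e. $c=1$ for every $n\ge1$. As you observed, the ratio is insensitive to $c$ anyway.
\end{itemize}
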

 \begin{proof} See Appendix  \ref{A2}.
 \end{proof}

We also provide convenient expressions for the Jost function at the edges of the continuous spectrum via the threshold asymptotics of the perturbation determinant considered as a function of the spectral parameter.
(Notice that in multidimensional problems, where the machinery of the Jost function is no longer available, the study of relevant threshold asymptotics of the perturbation determinant ${\det}_{J+V/J}(z)$  plays a fundamental role in understanding the mechanism of the emergence of virtual levels
 (see, e.g., \cite{LakMot}).)

 \begin{corollary}\label{inz} The following representations 
\begin{equation}\label{sleva}
j_0^{(n)}(1)=\lim_{z\uparrow 0}\sqrt{-z}\,{\det}_{J_n/J}(z)
\end{equation}
and 
\begin{equation}\label{sprava}
j_0^{(n)}(-1)=\lim_{z\downarrow 4}\sqrt{z-4}\,{\det}_{J_n/J}(z)
\end{equation}
hold.

 In particular, the perturbation determinant  ${\det}_{J_n/J}(z)$ is bounded in a neighborhood of the left threshold $z=0$ if and only if  the threshold Jost function $j_0^{(n)}(1)$ vanished at the threshold.
 Analogously,  the perturbation determinant  ${\det}_{J_n/J}(z)$ is bounded in a neighborhood of the right threshold $z=4$ if and only if  the threshold Jost function $j_0^{(n)}(-1)$ vanished at the threshold.
 
\end{corollary}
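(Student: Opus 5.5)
We will deduce both representations from Proposition \ref{propdet} by passing to the local parameter $\Theta$ and expanding near the thresholds $\Theta=\pm1$. For real $z<0$ the contracting branch of \eqref{guk} gives $\Theta(z)\in(0,1)$, and $\Theta(z)\to 1$ as $z\uparrow 0$. For $z>4$ it gives $\Theta(z)\in(-1,0)$, and $\Theta(z)\to -1$ as $z\downarrow 4$. By \eqref{vita},
$$
\sqrt{-z}\,{\det}_{J_n/J}(z)=\sqrt{-z}\,\frac{2\,j_0^{(n)}(\Theta)}{1-\Theta^2},\qquad \Theta=\Theta(z).
$$
It therefore suffices to show that $\sqrt{-z}/(1-\Theta^2)\to \tfrac12$ as $z\uparrow0$. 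The limit in \eqref{sleva} then follows, because $j_0^{(n)}$ is a polynomial in $\Theta$ (Corollary \ref{teit}) and hence continuous at $\Theta=1$.

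The key computation uses the dispersion relation. From \eqref{guk},
$$
-z=\Theta+\frac1\Theta-2=\frac{(1-\Theta)^2}{\Theta},
$$
so $\sqrt{-z}=(1-\Theta)/\sqrt{\Theta}$ for $\Theta\in(0,1)$. Here we take the positive square root, which is legitimate since $1-\Theta>0$. This gives
$$
\frac{\sqrt{-z}}{1-\Theta^2}=\frac{1}{\sqrt{\Theta}\,(1+\Theta)}\longrightarrow\frac12 \qquad (\Theta\uparrow1).
$$
The right threshold is handled the same way. We have $z-4=-(\Theta+1/\Theta+2)=-(1+\Theta)^2/\Theta$, and with $\Theta\in(-1,0)$ this gives $\sqrt{z-4}=(1+\Theta)/\sqrt{-\Theta}$. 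Hence
$$
\frac{\sqrt{z-4}}{1-\Theta^2}=\frac{1}{\sqrt{-\Theta}\,(1-\Theta)}\to\frac12,
$$
which yields \eqref{sprava}. As an alternative for the right threshold, one could invoke the Symmetry Principle \eqref{symm}, but the direct computation is equally short.

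For the boundedness statement, write ${\det}_{J_n/J}(z)=(\sqrt{-z})^{-1}\cdot\sqrt{-z}\,{\det}_{J_n/J}(z)$. If $j_0^{(n)}(1)\neq0$, the determinant blows up like $j_0^{(n)}(1)/\sqrt{-z}$ as $z\uparrow 0$, so it is unbounded. Conversely, suppose $j_0^{(n)}(1)=0$. Then $j_0^{(n)}(\Theta)=(1-\Theta)p(\Theta)$ for some polynomial $p$. By \eqref{vita}, near $\Theta=1$ the determinant equals $2p(\Theta)/(1+\Theta)$, which is analytic and bounded in a neighborhood of $\Theta=1$. Since $\Theta(z)$ maps a neighborhood of $z=0$ in $\bbC\setminus[0,4]$ into a neighborhood of $\Theta=1$ in the disk, the determinant is bounded there. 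The right threshold is identical, with $1+\Theta$ in place of $1-\Theta$.

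The only delicate point is choosing the branch of the square root consistently with the contracting branch of $\Theta$, so that the constants come out to exactly $\tfrac12$ with the right sign. The remaining steps are routine once this is fixed.
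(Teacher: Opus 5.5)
Your proof is correct and follows the paper's argument: rewrite $-z$ (resp.\ $z-4$) via the dispersion relation as a perfect square divided by $\Theta$ (resp.\ $-\Theta$), let $\Theta\to\pm1$ in \eqref{vita}, and get boundedness from the cancellation of the factor $1\mp\Theta$ when the threshold Jost function vanishes. Your identity $z-4=(1+\Theta)^2/(-\Theta)$ is the correct one; the paper's displayed $\bigl(\sqrt{-\Theta}+1/\sqrt{-\Theta}\bigr)^2$ should have a minus sign, although its subsequent limit computation is right. Your explicit treatment of the unbounded case $j_0^{(n)}(1)\neq0$ also spells out a direction the paper leaves implicit.
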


\begin{proof} Since from \eqref{guk} it follows that 
$$
-z=\Theta+\frac1\Theta-2=\left (\sqrt{\Theta}-\frac{1}{\sqrt{\Theta}}\right )^2, \quad z<0, \quad 0<\Theta<1,
$$
we have
$$
\lim_{z\uparrow 0}\sqrt{-z}\,{\det}_{J_n/J}(z(\Theta))= 
\lim_{\Theta\uparrow 1}\frac{1-\Theta}{\sqrt{\Theta}}\frac{2}{1-\Theta^2}\cdot j_0^{(n)}(\Theta)=
j_0^{(n)}(1).
$$

To prove \eqref{sleva}, we use 
$$z-4=\left (\sqrt{-\Theta}+\frac{1}{\sqrt{-\Theta}}\right )^2, \quad z>4, \quad -1<\Theta<0,
$$
and  see that 
\begin{align*}
\lim_{z\downarrow 4}\sqrt{z-4}\,{\det}_{J_n/J}(z(\Theta))&=\lim_{\Theta\downarrow -1}
\frac{1+\Theta}{\sqrt{-\Theta}}\frac{2}{1-\Theta^2}\cdot j_0^{(n)}(\Theta)=
j_0^{(n)}(-1).
\end{align*}

From Proposition \eqref{propdet} it follows that the condition  $j_0^{(n)}(1)=0$  (resp. $j_0^{(n)}(-1)=0$)
means that the perturbation determinant 
$${\det}_{J_n/J}\left (z\right )={\det}_{J_n/J}\left (2-\Theta-\frac1\Theta\right ), \quad \Theta\in \bbC,$$ is analytic in a neighborhood of the point $\Theta=1$  (resp. $\Theta=-1$), and therefore  ${\det}_{J_n/J}(z)$, as a function of the spectral parameter $z$,  is bounded in a neighborhood of the  threshold $z=0$ (resp. $z=4$), on the first sheet of the double, completing the proof.  

\end{proof}

\section{Critical Operators and the threshold virtual states}

To better understand  the  threshold phenomena associated with the birth and annihilation of eigenvalues, introduce the concept of a critical operator.

\begin{definition} Suppose that $V=V^*$ is a compact operator in  $\ell^2(\bbN )$.
We say that the operator $J+V$   is critical  
 at its lower  threshold  $\lambda=\inf \sigma_{ess}(J)=0$ (respectively, its upper threshold $ \lambda=\sup \sigma_{ess}(J)=4$) 
 if   the function 
\begin{equation}\label{count}
V\mapsto \tr \, E_{J+V}((-\infty,0)) \quad (\text{resp.}\quad  V\mapsto  \tr\,  E_{J+V}((4,\infty)))
\end{equation}
is discontinuous  at $V=V_0$.

Here  $E_{J+V}(\cdot)$ stands for the spectral measure associated with the self-adjoint operator $J+V$.
\end{definition}

 As Corollary  \ref{inz} suggests, the answer to   whether the operator $J+V$ is critical  at a given threshold is determined by the behavior of the perturbation  determinant  as a function of the spectral parameter $z$ in the neighborhood of the threshold. Our nearest goal is to show that  the capture of an eigenvalue by the continuous spectrum leads to the formation  of a super-resonant  (virtual) state. These states are characterized by the existence of a bounded solution to   the corresponding Schr\"odinger equation,
cf. \cite[Theorem 9.7]{Yafaev22}.

\begin{theorem}\label{mu11} 
The operator $J_n$  given by \eqref{jac}
is critical at the left  (resp. right) threshold  if and only if the threshold Jost function $C_n=j_0^{(n)}(1)$ vanishes,
that is, 
$$C_n=0
$$
(resp. $j_0^{(n)}(-1)=0$).

In this case, left (resp. right) threshold is a virtual level and  the  Jost solution 
$
\Psi^{(\ell)}=\{j_k^{(n)}(1)\}_{k=1}^\infty$  (resp. $  \Psi^{(r)}= \{j_k^{(n)}(-1)\}_{k=1}^\infty
$)
is a bounded solution   (virtual state) of the 
equation 
$$
J_n\Psi^{(\ell)}=0 \quad (\text{resp.}\,\,J_n\Psi^{(r)}=4 \Psi^{(r)}).
$$
 In particular,
 \be\label{pm}
 \Psi_k^{(\ell)}=1\quad \text{ and }\quad \Psi^{(r)}_k=(-1)^k, \quad k\ge  \max\{2,n\}.
 \end{equation}

\end{theorem}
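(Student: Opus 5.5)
We handle the left threshold in detail; the right one follows from the Symmetry Principle \eqref{symm}, since $V\mapsto -V$, $\Theta\mapsto-\Theta$ exchanges the roles of $z=0$ and $z=4$. We use that $J_n-J=P_nVP_n$ has rank at most $n$, and that the eigenvalues of $J_n$ below $0$ are exactly the zeros of $\det_{J_n/J}(z)$ on $(-\infty,0)$. By Proposition \ref{propdet}, these correspond to the zeros of the polynomial $\Theta\mapsto j_0^{(n)}(V;\Theta)$ in $(0,1)$, and all such zeros are simple by self-adjointness. The counting function in \eqref{count} is therefore the number of zeros of $j_0^{(n)}(V;\cdot)$ in the open interval $(0,1)$.

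Sufficiency of $C_n\ne 0$ for non-criticality. By Corollary \ref{teit}, $j_0^{(n)}$ is jointly polynomial in $(V,\Theta)$. Zeros cannot enter $(0,1)$ through $\Theta=0$, because $j_0^{(n)}(0)=\cQ_n(0)/2=1/2$. Zeros in $(0,1)$ also cannot collide and leave the real axis, since they are simple and eigenvalues stay real. Hence the count can change under a small perturbation of $V$ only by a zero crossing $\Theta=1$, which forces $C_n(V_0)=0$. More simply, if $C_n(V_0)\neq0$, continuity gives a neighborhood of $V_0$ on which the number of zeros in $(0,1]$ is locally constant, so the operator is not critical. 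Since the definition allows general compact perturbations, we would also cite stability of isolated eigenvalues: if $0$ is not a resonance, no eigenvalue can emerge from $0$. Restricting to diagonal perturbations supported on $[1,n]$ already suffices for the converse direction.

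Necessity. Suppose $C_n(V_0)=0$. Consider the perturbation $V_0-\epsilon P_1$ with $\epsilon>0$ small. We have $\partial_{v_1}j_0^{(n)}(\Theta)|_{\Theta=1}\neq0$; this can be checked from the recurrence, since $\partial_{v_1}$ of $C_n$ is the analogous Jost polynomial of the chain shifted by one site, and that polynomial is nonzero at $\Theta=1$ whenever $C_n=0$, because two consecutive solutions cannot both vanish. Hence the root at $\Theta=1$ moves to $\Theta=1-c\epsilon+O(\epsilon^2)$ with $c>0$ for the appropriate sign of $\epsilon$, giving a new eigenvalue below $0$. So the count jumps and $J_n$ is critical. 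Choosing the correct sign, so that the root moves into $(0,1)$ rather than to $\Theta>1$ on the second sheet, is the main obstacle. We plan to settle it by monotonicity: $J_n-\epsilon P_1\le J_n$, so by min--max the count cannot decrease. Since the count must change under one of the two signs of $\epsilon$ (the root is simple and crosses), and it cannot decrease, it must increase for $-\epsilon$.

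Virtual state. The Jost solution $j_k^{(n)}(\Theta)$ solves \eqref{JJJ} for $J_n$. At $\Theta=1$, i.e. $z=0$, the equation $(J_n\Psi)_k=0$ holds for $k\ge1$, with the boundary term $a_0 j_0=0$ because $C_n=0$. Hence $\Psi^{(\ell)}$ satisfies $J_n\Psi^{(\ell)}=0$ as an $\ell^2(\bbN)$-type sequence. For $k>n$ the perturbation vanishes and the coefficients are the free ones, so the Jost solution equals $\Theta^k$ exactly from the appropriate index on. At $\Theta=1$ this gives $\Psi_k^{(\ell)}=1$, and at $\Theta=-1$ it gives $(-1)^k$, for $k\ge\max\{2,n\}$ (the index $2$ accounts for $a_1=-\sqrt2$). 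The solution is therefore bounded, which proves \eqref{pm}.
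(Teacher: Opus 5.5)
Your outline follows the paper's: a Hurwitz-type stability argument when $C_n\neq0$, a derivative in one potential coordinate to force a sign change when $C_n=0$, and the explicit free tail for the virtual state. There is, however, a gap in the critical direction.

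\textbf{The gap.} From $\partial_{v_1}C_n\neq0$ you only learn that $C_n=j_0^{(n)}(1)$ changes sign as $v_1$ crosses its value. The step ``hence the root at $\Theta=1$ moves to $\Theta=1-c\epsilon+O(\epsilon^2)$'' is an implicit-function argument, and it also needs $\partial_\Theta j_0^{(n)}(V_0;1)\neq0$, i.e.\ that the threshold zero is simple. You assert this (``the root is simple and crosses'') without proof. Self-adjointness only gives simplicity for zeros in the open interval $(0,1)$, where they correspond to eigenvalues, not for a zero at the threshold. This step is what converts a sign change into a change of the eigenvalue count, so it is load-bearing.

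There are two quick repairs.
\begin{itemize}
\item[(a)] Use the paper's parity argument. Since $j_0^{(n)}(V_\pm;0)=\tfrac12>0$ while $j_0^{(n)}(V_\pm;1)$ have opposite signs, the numbers of (simple) zeros of $j_0^{(n)}(V_\pm;\cdot)$ in $(0,1)$ have opposite parity, hence differ. This needs neither simplicity at $\Theta=1$ nor your min--max sign discussion.
\item[(b)] Prove simplicity directly. Since $dz/d\Theta=0$ at $\Theta=1$, differentiating $j_0^{(n)}=\tfrac12\Theta^n(q_n-\Theta q_{n-1})$ and using $q_n=q_{n-1}$ at $z=0$ gives $\partial_\Theta j_0^{(n)}(1)=-\tfrac12\,q_{n-1}|_{z=0}$. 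This is nonzero, because the recurrence cannot have two consecutive zeros.
\end{itemize}

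\textbf{Two smaller points.} First, your conclusion $\partial_{v_1}C_n\neq0$ is true, but not for the reason you give. Let $r_k=\partial_{v_1}q_k$ be the determinant of the chain on sites $2,\dots,k$, with $r_0=0$ and $r_1=1$. Then $q_k$ and $r_k$ solve the same recurrence, so their Wronskian is constant: $q_{n-1}r_n-q_nr_{n-1}=q_0r_1-q_1r_0=2$. This rules out $q_n=q_{n-1}$ and $r_n=r_{n-1}$ holding simultaneously. The paper avoids this computation by differentiating in $v_n$ instead: there $\partial_{v_n}C_n$ is a multiple of $q_{n-1}|_{z=0}$, whose nonvanishing is immediate.

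Second, your sign selection (``it cannot decrease, so it must increase for $-\epsilon$'') does not exclude a decrease under $+\epsilon$ unless you also invoke persistence of the existing negative eigenvalues. Criticality only requires the counting function to be discontinuous, so this step can simply be dropped.
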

\begin{proof} 
We discuss the case of the left threshold first.

{\it ``Only If Part".} We will prove the contrapositive statement.

Suppose $j_0^{(n)}(1)\ne0$ for  some $V$. Since  the Jost 
function $j_0^{(n)}(\Theta)=j_0^{(n)}(V;\Theta)$
 is a continuous function in $V$ and  $j_0^{(n)}(0)=1$,  in a neighborhood   of $V$
 the number of zeros of the corresponding Jost solution on $[0,1]$ remains the same, by Rouche's Theorem in the form of Hurwitz (see, e.g. \cite{Zyg}).
That is, the function $V\mapsto  \tr E_{J_n}((-0, \infty))$ is continuous in that neighborhood,
and hence, the operator $J_n$ is not critical in this case.  

{\it ``If Part".} Suppose that  
 \begin{equation}\label{nuli}
j_0^{(n)}(1)=0.
 \end{equation}
 By Proposition \ref{propjost},  we have the representation
 \be\label{sima} j_0^{(n)}(1)=\frac12(q_n-q_{n-1}),
 \ee
 where $q_n$ are  determined by the recurrent relations \eqref{recq} 
with the initial data  \eqref{indat}.

 Therefore,   \eqref{nuli} implies
 $
 q_n=q_{n-1}
 $.
From  \eqref{recq} and \eqref{sima} it also  follows that 
 \be\label{der}
 \frac{\partial}{\partial v_n}j_0^{(n)}(1)=q_{n-1}\ne 0,
 \ee
 otherwise, $q_n=q_{n-1}=0$ which in view of \eqref{recq} and \eqref{indat} would imply
 $q_0=0$.  
 Now, since \eqref{der} holds,  one can find two potentials $V_\pm$  sufficiently close (in the operator norm) to $V$ and supported on $[1,\dots, n]$
 such that  
 \be\label{ton} 
 j_0^{(n)}(V_+;1)j_0^{(n)}(V_-,1)<0.
 \ee

Therefore, the polynomial $$
 P(\Theta)=j_0^{(n)}(V_+;\Theta)j_0^{(n)}(V_-;\Theta),
\quad \Theta\in [0,1],$$ is positive at  the point $\Theta=0$  ($P(0)=1$ as it follows from  by Corollary \ref{teit}) and negative at $\Theta=1$ (by \eqref{ton}). In particular, the number of  zeros  of $P(\Theta)$ in $[0,1]$ 
  counting multiplicity is odd, which implies that the number of zeros of the factors   $ j_0^{(n)}(V_+;\Theta)$ 
  and $j_0^{(n)}(V_-;\Theta)$  of the polynomial $P(\Theta)$ in the interval $(0,1)$ are different. Hence,
$$
\tr E_{J+V_+}((-0, \infty))\ne \tr E_{
J+V_-}((-0, \infty)),
$$
for the zeros   of the Jost function of an operator in $(0,1)$ are in one-to-one correspondence with the eigenvalues of the operator on the semi-axes $(-\infty,0)$.

Since the potentials  $V_\pm$ can be chosen to be as close (in the operator norm) to $V$ as we wish, the operator $J_n(V)$ is critical in this case. 
This completes the proof of the theorem as far as the left threshold $z=0$ is concerned.

The proof of the  remaining statement 
regarding the right threshold $z=4$  is analogous.

The last assertion of the theorem  is a corollary of  the definition of the  Jost function and Jost solution
(see Section 2).

Indeed,  since the difference $J_n-J$ is a finite rank  potential supported on the integer  interval $[1, \dots, n]$, 
for the Jost function we have the representation 
$$
j^{(n)}_k(\Theta)=\Theta^k, \quad k\ge \max\{2,n\}, \quad \Theta\in \bbC.
$$
If the operator $J_n$ is critical at the left threshold $\lambda=0$ and therefore $C_n=j^{(n)}_0(1)=0$, from \eqref{JJ} it follows that 
the (bounded) sequence
 $$
\Psi^{(\ell)}_k=j^{(n)}_k(1), \quad k\ge 1, 
$$
solves the equation 
$
J_n\Psi^{(\ell)}=0 $
and \eqref{pm} holds.

The case of the right threshold is treated in an analogous manner.

\end{proof}

\section{Nodal hypersurfaces of the polynomials $Q_n$  and the Jost function $C_n$}
 
 From now on, we adopt a new notation: rather than representing the potential $V$ as a vector of strengths $(v_1, v_2, \dots)$, we will represent  it  as  
  $$V=\text{diag}[{2\mu_1},\mu_2,\dots],
 $$
 with 
 $$v_1=2\mu_1,\quad v_k=\mu_k,\quad k\ge 2.
 $$
 We change the notation here for historical reasons: in  
 \cite{LO'zdemir:2016,
LACAOT:2023,LakMot}
the interaction potential $V$ was represented in this form, and we adopt the same convention to facilitate comparison with those results.
 
 Introduce the polynomials
 $$
 Q_n=\frac12q_n|_{z=0},
 $$
 where $q_n$ are the polynomials referred to in Proposition \ref{propjost}.
 
It follows that  the Jost function  $C_n=j_0^{(n)}(1)$ on the left threshold can be represented as
 \be\label{CDD}
 C_n=Q_{n}-Q_{n-1}.
 \end{equation}

From Corollary \ref{teit} it follows that the polynomials $Q_n(\mu_1, \dots, \mu_n)$ satisfy the recurrence relations
\begin{equation}\label{Qrelations}
 Q_{n+1}=(2+\mu_{n+1})Q_n-Q_{n-1}, \quad n\ge 1,
 \end{equation}
 \begin{equation}\label{Qrelations1}
Q_0=1,
 \ee
 $$Q_1(\mu_1)=1+\mu_1.
 $$
 The key observation for our further considerations is 
  that
 the affine variety  ${\bf V}(Q_{n+1})$,  associated with the polynomial $Q_{n+1}$
is  the graph of 
  the  rational function
\begin{equation}\label{defphi}
\Phi_n(\mu_1,\dots, \mu_n )=\frac{Q_{n-1}(\mu_1,\dots, \mu_{n-1})}{Q_n(\mu_1,\dots,  \mu_{n})}-2
\end{equation} defined    on  the  domain 
\begin{equation}\label{morse}
\Dom(\Phi_n)=\bbR^n\setminus {\bf V}(Q_n).
\end{equation}

 It follows from the recurrence relations \eqref{Qrelations} and \eqref{Qrelations1} that their solutions cannot have two consecutive vanishing terms. Therefore, the polynomials $Q_{n-1}$ and $Q_n$ do not vanish simultaneously, and consequently no cancellation occurs in \eqref{defphi}. Hence, $\Dom(\Phi_n)$ is the maximal domain on which $\Phi_n$ is well-defined.

It also follows from  \eqref{Qrelations} and \eqref{defphi} that 
\begin{equation}\label{terr}{\bf V}(Q_{n+1})=\graph(\Phi_n),
\end{equation}
and hence
\be\label{domgr} \Dom (\Phi_{n+1})=(\graph(\Phi_n))^c.
\ee
 
Combining   \eqref{terr} and \eqref{domgr} shows that the sequence of functions  $\Phi_n$ can be  defined recursively: the graph of $\Phi_n$ determines the structure of the singular set of the function  $\Phi_{n+1}$ in the next generation. In this way, the family may be regarded as a  ``dynamical system'' acting on singularities, with the effective parameter space growing in dimension as $n$  
 increase:
$$
\dots \to \mathbf{V}(Q_n) \to \Dom(\Phi_n) \to \graph(\Phi_n) \to \mathbf{V}(Q_{n+1}) \to \dots. 
$$

The key geometric observation is this setting is the following   result.

\begin{theorem}\label{decomm}
 The domain of the rational function $\Phi_n$  decomposes into  $n+1$  unbounded connected  components,
  \begin{equation}\label{dom}
\Dom (\Phi_n)=\bigcup_{k=0}^n D_k^{(n)}.
\end{equation}

Here
$$D_1^{(1)}=(-\infty,-1)\quad \text{ and} \quad  D_0^{(1)}=(-1,\infty)$$
and the components in higher dimensions are defined inductively:

\begin{itemize}
 \item[]$D_0^{(n+1)}$ is the ephigraph of $\Phi_n$ restricted to $D_0^{(n)}$;
 \item[] $D_k^{(n+1)}$, $k=1, \dots n$  is the hypograph of $\Phi_n$ restricted to $D_{k-1}^{(n)}$
 joined together with the epigraph of $\Phi_n$ restricted to $D_k^{(n)}$ along their shared boundary;
\item[] and
\item[]$D_{n+1}^{(n+1)}$ is the hypograph of $\Phi_n$ restricted to $D_n^{(n)}$.

\end{itemize}
 \end{theorem}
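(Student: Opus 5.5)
We argue by induction on $n$, carrying along a sign invariant that identifies each component. The invariant is as follows: on $D_k^{(n)}$, the number of sign changes in the sequence
$$Q_0=1,\;Q_1,\;\dots,\;Q_n$$
equals $k$. This count is well defined on $\Dom(\Phi_n)$, because $Q_n\ne0$ there and, by the remark after \eqref{defphi}, no two consecutive $Q_j$ vanish. An interior zero $Q_j=0$ with $j<n$ forces $Q_{j-1}Q_{j+1}<0$ by \eqref{Qrelations}, so it contributes exactly one sign change whatever sign is assigned to the zero. In particular, the sign of $Q_n$ on $D_k^{(n)}$ is $(-1)^k$.

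For $n=1$ everything is explicit. We have $Q_1=1+\mu_1$, so $\Dom(\Phi_1)=(-\infty,-1)\cup(-1,\infty)$, with $0$ sign changes on $(-1,\infty)$ and $1$ on $(-\infty,-1)$.

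For the inductive step we use \eqref{terr}: ${\bf V}(Q_{n+1})=\graph(\Phi_n)$, and $Q_{n+1}=Q_n(\mu_{n+1}-\Phi_n)$. Hence $\Dom(\Phi_{n+1})$ is the complement of a graph over $\Dom(\Phi_n)$. Over each component $D_k^{(n)}$, which is connected by hypothesis, $\Phi_n$ is continuous. The set of points $(\mu',\mu_{n+1})$ with $\mu'\in D_k^{(n)}$ off the graph therefore splits into exactly two connected pieces, the epigraph $E_k$ and the hypograph $H_k$. Each piece is homeomorphic to $D_k^{(n)}\times(0,\infty)$ via $(\mu',t)\mapsto(\mu',\Phi_n(\mu')\pm t)$, so it is connected and unbounded.

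The remaining points of $\Dom(\Phi_{n+1})$ lie over ${\bf V}(Q_n)$. There $\Phi_{n+1}$'s denominator is $Q_{n+1}=-Q_{n-1}\neq0$, so the whole fibre $\{\mu'\}\times\bbR$ belongs to $\Dom(\Phi_{n+1})$. These fibres are the "shared boundary" along which $H_{k-1}$ and $E_k$ are glued.

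Next we sign-count on the pieces. On $E_k$ we have $\mathrm{sgn}\,Q_{n+1}=\mathrm{sgn}\,Q_n$, giving $k$ changes. On $H_k$ we have $\mathrm{sgn}\,Q_{n+1}=-\mathrm{sgn}\,Q_n$, giving $k+1$ changes. On a fibre over $\mu'\in{\bf V}(Q_n)$, the count for $Q_0,\dots,Q_{n-1}$ has some value $m$, and the zero $Q_n$ adds one change, so the total is $m+1$. The sign-change count is locally constant on $\Dom(\Phi_{n+1})$, since no two consecutive terms vanish. Consequently the sets $G_k=\{\text{count}=k\}$ are open, pairwise disjoint, and cover $\Dom(\Phi_{n+1})$, and every connected component lies in a single $G_k$. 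This already shows that the $n+2$ candidate pieces cannot merge with each other.

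We also have $G_k\supseteq E_k\cup H_{k-1}$, together with those fibres over ${\bf V}(Q_n)$ that have count $k$. This matches the description $D_k^{(n+1)}$ for $1\le k\le n$, while $G_0=E_0$ and $G_{n+1}=H_n$ (the fibres have count $\ge1$ and $\le n$).

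The main obstacle is showing that each $G_k$, $1\le k\le n$, is connected. This means showing that $E_k$ and $H_{k-1}$ are actually joined through the fibres, and that every fibre point attaches to both. Take $\mu'_0\in{\bf V}(Q_n)$ with the count of $Q_0,\dots,Q_{n-1}$ equal to $k-1$. Near $\mu'_0$, $\nabla Q_n\ne0$: for $n\ge2$ we have $\partial Q_n/\partial\mu_n=Q_{n-1}\ne0$, as in \eqref{der}. So ${\bf V}(Q_n)$ is locally a smooth hypersurface separating $D_{k-1}^{(n)}$ from $D_k^{(n)}$. Near the hypersurface, $\Phi_n=Q_{n-1}/Q_n-2$ tends to $\pm\infty$ with opposite signs on the two sides. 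On the side where $Q_n$ has the sign of $Q_{n-1}$, i.e.\ $D_{k-1}^{(n)}$, it tends to $+\infty$, so a point $(\mu'_0,\mu_{n+1})$ is approached from the hypograph $H_{k-1}$. On the other side $\Phi_n\to-\infty$, so the same point is approached from $E_k$. Hence a small ball around any fibre point meets both $H_{k-1}$ and $E_k$, and the union is connected, provided at least one such fibre exists. Existence for every $1\le k\le n$ follows from the induction hypothesis, because ${\bf V}(Q_n)=\graph(\Phi_{n-1})$ sits over all of $\Dom(\Phi_{n-1})$, which contains components of every count $0,\dots,n-1$.

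Unboundedness of every component is immediate, since each contains some $E_k$ or $H_k$.

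The delicate points I would check carefully are the sign bookkeeping for which side gives $\pm\infty$, and the case $n=1$, where $\partial Q_1/\partial\mu_1=1$.
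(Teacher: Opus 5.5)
Your proof is correct. It produces exactly the paper's decomposition, $D_k^{(n+1)}=L_{k-1}^{(n)}\cup(\Gamma_{k-1}^{(n-1)}\times\bbR)\cup U_k^{(n)}$ in the paper's notation, but by a different key tool.

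The paper only proves the set-theoretic identity $\Dom(\Phi_{n+1})=\bigcup_k (L_k^{(n)}\cup U_k^{(n)})\cup\bigcup_{k'}(\Gamma_{k'}^{(n-1)}\times\bbR)$ by bookkeeping. It then asserts, ``by induction,'' that $\Gamma_{k-1}^{(n-1)}\times\bbR$ is the common boundary of $L_{k-1}^{(n)}$ and $U_k^{(n)}$. It does not check on which side of ${\bf V}(Q_n)$ the function $\Phi_n$ tends to $+\infty$. Nor does it show the glued pieces are open, i.e.\ that two different $D_k^{(n+1)}$ cannot lie in one component.

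Your sign-change invariant for $Q_0,\dots,Q_n$ settles both points at once. Its local constancy gives openness and mutual separation of the $n+2$ pieces. It also identifies the side where $Q_n$ has the sign of $Q_{n-1}$ as $D_{k-1}^{(n)}$. That forces $\Phi_n\to+\infty$ there and $\Phi_n\to-\infty$ on the $D_k^{(n)}$ side, which is exactly what the gluing needs. The side identification alone could also be read off from the paper's recursion: just above $\Gamma_{k-1}^{(n-1)}$ lies $U_{k-1}^{(n-1)}\subset D_{k-1}^{(n)}$, and just below lies $L_{k-1}^{(n-1)}\subset D_k^{(n)}$. Separation would still need a local argument like yours.

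The paper's route is shorter and purely geometric. Yours gives a complete proof, plus an intrinsic description: $D_k^{(n)}$ is the set where $1,Q_1,\dots,Q_n$ has exactly $k$ sign changes, so $\mathrm{sgn}\,Q_n=(-1)^k$ there. Since $Q_j$ is half the $j$-th leading principal minor of $J+V_\mu$ for $j\ge 1$, this is the Sturm--Jacobi count. So $D_k^{(n)}$ is the set where the $n\times n$ section of $J+V_\mu$ is invertible with exactly $k$ negative eigenvalues, which fits naturally with the eigenvalue counting in Theorem~\ref{koko}.
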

 
\begin{proof}
The claim  can easily be  verified in low dimensions \cite{LO'zdemir:2016,LACAOT:2023}.

 Indeed, if  $n=1$,  we have 
$$ Q_1(\mu_1)=1+\mu_1, 
$$
and therefore  ${\bf V}(Q_1)$ is a one-point set,  ${\bf V}(Q_1)=\{-1\}$.
Hence, the complement $({\bf V}(Q_1))^c=\bbR\setminus \{-1\}$ has two connected components,
$$D_1^{(1)}=(-\infty,-1)\quad \text{ and} \quad  D_0^{(1)}=(-1,\infty),$$
so that
$$
\Dom (\Phi_1)=({\bf V}(Q_1))^c=\bigcup_{k=0}^1 D_k^{(1)}.
$$

To see the pattern,   suppose next that  $n=2$. For the polynomial $Q_2(\mu_1,\mu_2)$ we have the representation
\begin{align*}
Q_2(\mu_1,\mu_2)=1+2\mu_1+\mu_2+\mu_1\mu_2.\end{align*}
Therefore,  ${\bf V}(Q_2)$ consists of two branches of the hyperbola
$$
1+2\mu_1+\mu_2+\mu_1\mu_2=0.
$$
In particular, 
\begin{equation}\label{n=2}
\Dom (\Phi_2)=({\bf V}(Q_2))^c=\bigcup_{k=0}^2 D_k^{(2)},
\end{equation}
where $D_2^{(2)}$ in the part of the plane $\bbR^2$  lying below  the branch of the hyperbola  in the left half-plane, 
$D_1^{(2)}$ lies between the two branches of the hyperbola,  and  $D_0^{(2)}$ is the part of the plane  located above the second branch of the hyperbola.

Now, we can proceed by induction.  

Suppose that  the partition \eqref{dom} has been already established for some  $n\ge 2$.

Observe that 
 the graph   of the rational  function $\Phi_n$ splits the cylinders $D_k^{(n)}\times\R,\, k=0,...,n$ constructed over the connected components $D_k^{(n)}$ of the domain 
of $\Phi_n$ into upper and lower parts, the epigraphs and hypographs, respectively:
\begin{equation}\label{nott}
    U^{(n)}_k=\text{epi}(\Phi_n|_{D^{(n)}_k})
 \quad \text{and}\quad 
  L^{(n)}_k=\text{hypo}(\Phi_n|_{D^{(n)}_k}),
  \end{equation}
  separated  by the graph of the restriction of the function  $\Phi_n$ onto the component $D^{(n)}_k$
\begin{equation}\label{gammy}
  \Gamma^{(n)}_k=\graph(\Phi_n|_{D^{(n)}_k}),\quad k=0,1,\dots,n.
  \end{equation}
 That is, 
  $$
D_k^{(n)} \times \bbR= L_k^{(n)}\cup \Gamma_k^{(n)}
\cup U_k^{(n)}.
$$

Here  we have used the following notation  for the  (open)   epigraph and hypograph of a function $f$:

$$\text{epi}(f)=\{(x,y)\in \Dom(f)\times \bbR\,|\,y>f(x)\}$$ 
and  
$$\text{hypo}(f )=\{(x,y)\in \Dom(f)\times \bbR\,|\,y<f(x)\},$$ respectively.

The upper and lower parts \eqref{nott}  are then joined together along their shared boundary, forming the connected components of the domain for the  function $\Phi_{n+1}$ as follows:
define the sets  $D^{(n+1)}_k,\quad k=0,\dots, n+1$ from the next generation: for $k=0$  and $k=n+1$   we set 
\begin{equation}\label{iind}
D_0^{(n+1)} = U_0^{(n)}, \qquad D_{n+1}^{(n+1)} = L_n^{(n)},
\end{equation}
and  for $ k = 1, \dots, n $ we define
\begin{equation}\label{iind1}
D_k^{(n+1)} = L_{k-1}^{(n)} \cup \left( \Gamma_{k-1}^{(n-1)} \times \mathbb{R} \right) \cup U_k^{(n)}.
\end{equation}

We need  to show  that decomposition of the definition domain into components \eqref{dom} holds in dimension  $ n+1$, that is, 
\begin{equation}\label{kokoko}
\Dom (\Phi_{n+1})=\bigcup_{k=0}^{n+1} D_k^{(n+1)}.
\end{equation}

Indeed, from \eqref{morse} it follows the space partition
\begin{align*}
\bbR^{n+1}&=(\Dom(\Phi_n)\times \bbR)\cup (\text{Gr}(\Phi_{n-1})\times\bbR)
\\&=\graph(\Phi_n)\cup \bigcup_{k=0}^n \left (L_k^{(n)}\cup U_k^{(n)}\right )\cup \bigcup_{k'=0}^{n-1}
\left (\Gamma_{k'}^{(n-1)}\times\bbR\right ).
\end{align*}
In view of \eqref{domgr} we get
\begin{equation}\label{ogo}
\Dom (\Phi_{n+1})=\bigcup_{k=0}^n \left (L_k^{(n)}\cup U_k^{(n)}\right )\cup \bigcup_{k'=0}^{n-1}
\left (\Gamma_{k'}^{(n-1)}\times\bbR\right ),
\end{equation}
which yields \eqref{kokoko} (by reshuffling the sets from the right hand side of \eqref{ogo} and using \eqref{iind} and \eqref{iind1}).

 By induction, one also
 shows  that the set $ \Gamma_{k-1}^{(n-1)} \times \mathbb{R} $ from \eqref{iind1} is the joint boundary of
  $L_{k-1}^{(n)}$ and $   U_k^{(n)}$,
  which ensures that the components $D^{(n+1)}_k$, $k=0,\dots, n+1$ are connected.
  
 \end{proof}
The affine variety ${\bf V}(Q_n)$, the zero set of the polynomial $Q_n$, and its complement admit the following description.
\begin{corollary}\label{nuliQ}
The affine variety ${\bf V}(Q_n)$ 
 is a disjoint union of $n$ smooth $(n-1)$-dimensional surfaces
 $$
 {\bf V}(Q_n)=\bigcup_{k=0}^{n-1} \Gamma_k^{(n-1)},
 $$
 where
 $$
\Gamma_k^{(n-1)}=\graph (\Phi_{n-1}|_{D^{(n-1)}_k}),\quad k=0,\dots, n-1,
$$
is   the graph of the  rational function  $\Phi_{n-1}$, restricted to the connected component  $D^{(n-1)}_k$ of its domain,  as described in Theorem \eqref{decomm}.

 Moreover, the complement  $({\bf V}(Q_n))^c$ of the affine variety  ${\bf V}(Q_n)$ is a disjoint union of 
$n+1$ unbounded open connected components corresponding to  the domain decomposition of the rational function of the next generation $\Phi_n$ (see  \eqref{dom}). That is, 
$$
({\bf V}(Q_n))^c=\bigcup_{k=0}^n D_k^{(n)}.
$$

\end{corollary}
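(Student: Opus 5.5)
The corollary repackages Theorem \ref{decomm} together with the identity \eqref{terr}, and I would derive it in that order. First I apply \eqref{terr} with $n-1$ in place of $n$, which gives ${\bf V}(Q_n)=\graph(\Phi_{n-1})$, where $\Phi_{n-1}$ is defined on $\Dom(\Phi_{n-1})=({\bf V}(Q_{n-1}))^c$. Theorem \ref{decomm} splits this domain into the disjoint open connected sets $D_k^{(n-1)}$, $k=0,\dots,n-1$. Restricting the graph to each piece gives the decomposition
$$
{\bf V}(Q_n)=\bigcup_{k=0}^{n-1}\Gamma_k^{(n-1)}.
$$
The pieces are pairwise disjoint because their projections onto the first $n-1$ coordinates are. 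For $n=1$ I use the convention that ${\bf V}(Q_1)=\{-1\}$ is the graph of the constant $\Phi_0=Q_{-1}/Q_0-2=-1$; this is consistent with \eqref{Qrelations} if one sets $Q_{-1}=1$.

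For smoothness, note that $\Phi_{n-1}=Q_{n-2}/Q_{n-1}-2$ is real-analytic on each open set $D_k^{(n-1)}$, since its denominator does not vanish there. The graph of a smooth function over an open connected set in $\bbR^{n-1}$ is a connected smooth embedded $(n-1)$-dimensional submanifold of $\bbR^n$. As an alternative check, the recurrence \eqref{Qrelations} gives $\partial Q_n/\partial\mu_n=Q_{n-1}$. This derivative never vanishes on ${\bf V}(Q_n)$, because two consecutive $Q$'s cannot vanish simultaneously. So $\nabla Q_n\neq0$ on the variety, and ${\bf V}(Q_n)$ is a smooth hypersurface.

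For the complement, $({\bf V}(Q_n))^c=\Dom(\Phi_n)$ by \eqref{morse}. Theorem \ref{decomm} at level $n$ then gives the decomposition into the $n+1$ sets $D_k^{(n)}$. These sets are open, since they are built from open epigraphs and hypographs glued along open cylinders, or more simply because they are the components of an open set. They are connected by the theorem.

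The main obstacle is to confirm that these sets really are the connected components, i.e.\ pairwise disjoint and not connected to one another, and that each is unbounded.
\begin{itemize}
\item \emph{Disjointness.} I would prove it inductively from \eqref{iind}--\eqref{iind1}: each $D_k^{(n+1)}$ projects into $D_{k-1}^{(n)}\cup D_k^{(n)}\cup\Gamma^{(n-1)}_{k-1}$, with the epigraph/hypograph labels separating the overlaps. Alternatively, I would attach to each region an invariant such as the number of sign changes in $(Q_0,Q_1,\dots,Q_n)$. This is a Sturm-type count, and I expect it to equal $k$ on $D_k^{(n)}$. It is locally constant off ${\bf V}(Q_n)$: whenever an intermediate $Q_j$ vanishes, its neighbours have opposite signs by the recurrence, so the count does not change. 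That gives $n+1$ distinct values, so no two regions can be joined.
\item \emph{Unboundedness.} Every epigraph and hypograph is unbounded in the $\mu_{n+1}$ direction, which gives this immediately.
\end{itemize}
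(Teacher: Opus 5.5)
Your proposal is correct and follows the paper's route: the corollary is read off from \eqref{terr} at level $n-1$ together with Theorem \ref{decomm} at levels $n-1$ and $n$, which is all the paper does, and your convention $Q_{-1}=1$, $\Phi_0\equiv-1$ properly covers $n=1$. Your extras are sound and go beyond the paper's sketch: $\partial Q_n/\partial\mu_n=Q_{n-1}\neq0$ on ${\bf V}(Q_n)$, and the sign-change count of $(Q_0,\dots,Q_n)$ does equal $k$ on $D_k^{(n)}$ by induction through \eqref{iind}--\eqref{iind1}. Two corrections to your "obstacle" discussion, though: pairwise disjointness is automatic from that construction, and the cylinders $\Gamma_{k-1}^{(n-1)}\times\bbR$ are not open, so openness of $D_k^{(n+1)}$ actually comes from $\Phi_n\to\pm\infty$ on the two sides of each cylinder.
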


 Now we can show  that the zero-level set of the Jost threshold function $C_n$ is   the graph of a rational function consisting of $n$  smooth, unbounded surfaces that partition the space into $n+1$ connected components.

\begin{theorem}\label{main00}  The affine varieties ${\bf V}(C_n)$ and  ${\bf V}(Q_{n})$
are related as 
\be\label{shity}
{\bf V}(C_{n})=(\underbrace{0,\dots, 0}_{(n-1)\text{ times}},1)+{\bf V}(Q_{n}).
\ee

In particular, the affine variety ${\bf V}(C_n)$
is the  disjoint union  of $n$ smooth surfaces $$
{\bf V}(C_n)=\bigcup_{k=0}^{n-1} \graph\left((\Phi_{n-1}+1)|_{D^{(n-1)}_k}\right ),
$$
and its compliment is a disjoint union 
$n+1$ unbounded path-connected open components,
\begin{equation}\label{split}
({\bf V}(C_n))^c=\bigcup_{k=0}^n G_k^{(n)},
\end{equation}
where 
\begin{equation}\label{concom}
 G_k^{(n)}=(\underbrace{0,\dots, 0}_{(n-1)\text{ times}},1)+D_k^{(n)},\quad k=0,1,\dots, n, 
\end{equation}
and 
$ D_k^{(n)}$, $k=0,1, \dots, n$ are the sets referred to in Theorem \ref{decomm}. 

\end{theorem}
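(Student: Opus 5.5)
The proof reduces to one algebraic identity: the threshold Jost function $C_n$ equals $Q_n$ with its last variable shifted by one. Everything else then transfers from Theorem \ref{decomm} and Corollary \ref{nuliQ} by a translation of $\bbR^n$.

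\emph{The identity.} For $n\ge 2$, substitute the recurrence \eqref{Qrelations} into the representation \eqref{CDD}:
$$
C_n=Q_n-Q_{n-1}=(2+\mu_n)Q_{n-1}-Q_{n-2}-Q_{n-1}=(1+\mu_n)Q_{n-1}-Q_{n-2}.
$$
The polynomials $Q_{n-1}$ and $Q_{n-2}$ do not depend on $\mu_n$. Hence the right-hand side is exactly the recurrence for $Q_n$ with $\mu_n$ replaced by $\mu_n-1$. In other words,
$$
C_n(\mu_1,\dots,\mu_n)=Q_n(\mu_1,\dots,\mu_{n-1},\mu_n-1).
$$
For $n=1$ we check directly: $C_1=Q_1-Q_0=\mu_1=Q_1(\mu_1-1)$. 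Consequently $\mu\in{\bf V}(C_n)$ if and only if $\mu-e_n\in{\bf V}(Q_n)$, where $e_n=(0,\dots,0,1)$. This gives \eqref{shity}, ${\bf V}(C_n)=e_n+{\bf V}(Q_n)$.

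\emph{The decomposition.} Apply the translation $T(\mu)=\mu+e_n$ to Corollary \ref{nuliQ}. Since ${\bf V}(Q_n)=\bigcup_{k=0}^{n-1}\Gamma_k^{(n-1)}$, we get
$$
T\bigl(\Gamma_k^{(n-1)}\bigr)=\bigl\{(\mu',\mu_n):\ \mu'\in D_k^{(n-1)},\ \mu_n=\Phi_{n-1}(\mu')+1\bigr\}=\graph\bigl((\Phi_{n-1}+1)|_{D_k^{(n-1)}}\bigr).
$$
These pieces are disjoint because the sets $D_k^{(n-1)}$ are. Each piece is smooth: it is the graph of a rational function on an open set where the denominator $Q_{n-1}$ does not vanish.

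For the complement, $T$ is a homeomorphism of $\bbR^n$ that maps ${\bf V}(Q_n)$ onto ${\bf V}(C_n)$. It therefore maps $({\bf V}(Q_n))^c$ onto $({\bf V}(C_n))^c$. By Corollary \ref{nuliQ}, $({\bf V}(Q_n))^c=\bigcup_{k=0}^n D_k^{(n)}$ is a disjoint union of $n+1$ unbounded open connected components. The images $G_k^{(n)}=e_n+D_k^{(n)}$ are then disjoint, open and unbounded, which gives \eqref{split} and \eqref{concom}. Each $G_k^{(n)}$ is path-connected, since open connected subsets of $\bbR^n$ are path-connected and translation preserves this.

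\emph{Where the work lies.} There is no real obstacle beyond the identity in the first step, which is a one-line use of the recurrence. The substantive geometric content — the count and the connectedness of the components — is entirely inherited from Theorem \ref{decomm} and Corollary \ref{nuliQ}, so the result is only as strong as those. The one point needing care is the case $n=1$, where $Q_{n-2}$ is undefined and the identity must be checked by hand, as done above.
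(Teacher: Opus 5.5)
Your proof is correct and follows essentially the paper's route. The paper uses \eqref{CDD} and the recurrence to get ${\bf V}(C_n)=\graph(\Phi_{n-1}+1)$, which is the shift of ${\bf V}(Q_n)=\graph(\Phi_{n-1})$ by $(0,\dots,0,1)$, and then transfers the decomposition from Theorem~\ref{decomm} and Corollary~\ref{nuliQ}; your identity $C_n(\mu)=Q_n(\mu_1,\dots,\mu_{n-1},\mu_n-1)$ is a slightly cleaner packaging of that same one-line computation, and you also check the case $n=1$ explicitly, which the paper does not.
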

\begin{proof}

Using the relation  
${\bf V}(Q_{n})=\graph(\Phi_{n-1})
$ (see eq. \eqref{terr} and \eqref{CDD}),
we see  that 
\begin{equation}\label{terrr}{\bf V}(C_{n})=\graph(\Phi_{n-1}+1),
\end{equation}
where $\Phi_n$ is a rational function given by \eqref{defphi}.
Therefore, \eqref{shity} holds 
and then \eqref{split} and  \eqref{concom} follow from  Theorem \ref{decomm} and Corollary \ref{nuliQ}.

\end{proof}

 In conclusion, we present several results that shed additional light on the geometry of the connected components  $D_k^{(n)}$ from the domain decomposition \eqref{dom} at infinity and will be needed in what follows.
 
 \begin{lemma}\label{kis} The set $D_0^{(n)}$ referred to in Theorem \ref{decomm} contains the origin.
 \end{lemma}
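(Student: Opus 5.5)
We argue by induction on $n$, using the recursive description of the components in Theorem \ref{decomm}. Specifically, we prove the stronger claim that the origin $0\in\bbR^n$ lies in $D_0^{(n)}$ and that
$$Q_k(0,\dots,0)=1\quad\text{for all } k\ge 0.$$
The identity for $Q_k$ is immediate from the recurrence \eqref{Qrelations}--\eqref{Qrelations1}. At $\mu=0$ it reads $Q_{k+1}=2Q_k-Q_{k-1}$ with $Q_0=Q_1(0)=1$, and this constant sequence solves it. In particular the origin never lies on ${\bf V}(Q_n)$, so it belongs to $\Dom(\Phi_n)$, and
$$\Phi_n(0,\dots,0)=\frac{Q_{n-1}(0)}{Q_n(0)}-2=-1.$$

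For the base case $n=1$: $D_0^{(1)}=(-1,\infty)$ contains $0$.

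For the inductive step, suppose $0\in D_0^{(n)}$. By Theorem \ref{decomm}, $D_0^{(n+1)}=U_0^{(n)}$ is the epigraph of $\Phi_n$ restricted to $D_0^{(n)}$. The origin of $\bbR^{n+1}$ is the point $(0,\dots,0;0)$ with base point $0\in D_0^{(n)}$. Its last coordinate is $0>-1=\Phi_n(0)$, so the point lies strictly above the graph. Hence it belongs to $U_0^{(n)}=D_0^{(n+1)}$, which completes the induction.

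The only delicate point is bookkeeping: we must ensure that the component labelled $D_0^{(n)}$ in Theorem \ref{decomm} is exactly the one produced by the epigraph rule \eqref{iind}. We also need the base point of the origin to lie in the correct component $D_0^{(n)}$ rather than merely in $\Dom(\Phi_n)$. The induction hypothesis supplies precisely this, so no analysis of the shape of the components is needed; only the value $\Phi_n(0)=-1<0$ matters.
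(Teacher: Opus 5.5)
Your proof is correct and is essentially the paper's own argument. You induct on $n$; in the inductive step, $\Phi_n(0,\dots,0)=-1$ puts the origin of $\bbR^{n+1}$ strictly above the graph of $\Phi_n|_{D_0^{(n)}}$, hence in its epigraph $D_0^{(n+1)}$. Your explicit check that $Q_k(0,\dots,0)=1$ for all $k$ simply supplies the computation the paper summarizes as following ``from the definition of $\Phi_n$''.
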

 \begin{proof} 
 Induction on $n$.

 The base of the induction, $n=1$: 
 $$0\in(-1,\infty)=D_0^{(1)}.$$
 
 Now, suppose that 
 $$
 \bbR^n\ni(0, \dots, 0)\in D_0^{(n)}$$
 for some space dimension $n$.  
 
From the definition  of the rational function $\Phi_n$ (see \eqref{defphi})  it follows that 
$$\Phi_n(0, \dots, 0)=-1
$$
and hence 
$$
 \bbR^{n+1}\ni(0,\dots, 0,-1)\in \graph(\Phi_n|_{D^{(n)}_0}).
$$
Therefore, the origin of the space $\bbR^{n+1}$
belongs to  the ephigraph of $ \Phi_n|_{D^{(n)}_0}$,  which,  by Theorem \ref{decomm},  coincides with 
the set  $ D_0^{(n+1)}$. Therefore,
\begin{equation}\label{prik}
 \bbR^{n+1}\ni(0,\dots, 0)\in 
D_0^{(n+1)}.
 \end{equation}
 This concludes the proof of the inductive step and hence the theorem.

 \end{proof}
 
 \begin{lemma}\label{cone} Given $n\in \bbN$, for $k=1,\dots, n$,
denote by $e_k^{(n)}$ the $n$-dimensional vector
$$
e_k^{(n)}=(\underbrace{0, \dots, 0}_{n-k\text{ times}}, \underbrace{-1, \dots, -1}_{k\text{ times}}).
$$

Then 
\begin{equation}\label{mir}
te_k^{(n)}\in D_k^{(n)}, \quad \text{for all} \quad  t>0 \quad \text{large enough},
\end{equation}
where   $D_k^{(n)}$ are the connected components referred to in Theorem \ref{decomm}.
\end{lemma}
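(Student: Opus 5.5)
The plan is to argue by induction on $n$, using the inductive description of the components in Theorem \ref{decomm}. The key observation is that the last coordinate splits off cleanly. For $k=1,\dots,n+1$ we have
$$
te_k^{(n+1)}=\bigl(te_{k-1}^{(n)},-t\bigr),
$$
with the convention $e_0^{(n)}=0$. Indeed, $e_k^{(n+1)}$ has $n+1-k$ zeros followed by $k$ entries $-1$. Removing the last entry leaves $n+1-k=n-(k-1)$ zeros followed by $k-1$ entries $-1$, which is exactly $e_{k-1}^{(n)}$.

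\emph{Base case and inductive step.} For $n=1$ we have $te_1^{(1)}=-t\in(-\infty,-1)=D_1^{(1)}$ whenever $t>1$. Now assume \eqref{mir} holds in dimension $n$, and fix $k\in\{1,\dots,n+1\}$.
\begin{itemize}
\item If $k\ge 2$, the induction hypothesis gives $te_{k-1}^{(n)}\in D_{k-1}^{(n)}$ for all large $t$.
\item If $k=1$, Lemma \ref{kis} gives $te_0^{(n)}=0\in D_0^{(n)}$.
\end{itemize}
By Theorem \ref{decomm}, the hypograph $L_{k-1}^{(n)}$ of $\Phi_n|_{D_{k-1}^{(n)}}$ is contained in $D_k^{(n+1)}$; this holds both for $1\le k\le n$ and for $k=n+1$. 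So it suffices to prove
$$
-t<\Phi_n\bigl(te_{k-1}^{(n)}\bigr)\quad\text{for all large } t.
$$
I will obtain this by showing that $\Phi_n(te_{k-1}^{(n)})$ stays bounded as $t\to\infty$.

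\emph{Computing $\Phi_n$ along the ray.} Along the ray, set $m=k-1$, so that $\mu_1=\dots=\mu_{n-m}=0$ and $\mu_{n-m+1}=\dots=\mu_n=-t$.
\begin{itemize}
\item On the zero entries, the recurrence \eqref{Qrelations} with $Q_0=Q_1=1$ gives $Q_j=1$ for all $j\le n-m$.
\item After that, the recurrence becomes $Q_{j+1}=(2-t)Q_j-Q_{j-1}$. An easy induction shows $Q_{n-m+i}=(-t)^i+O(t^{i-1})$ for $i\ge 0$.
\end{itemize}
Consequently, for large $t$ we have $Q_n\neq 0$, so the point lies in $\Dom(\Phi_n)$. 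For the value of $\Phi_n$ there are two cases.
\begin{itemize}
\item If $m=0$, then $\Phi_n=Q_{n-1}/Q_n-2=-1$.
\item If $m\ge1$, then $Q_{n-1}/Q_n\sim(-t)^{m-1}/(-t)^m\to 0$, so $\Phi_n\to-2$.
\end{itemize}
In either case $\Phi_n$ is bounded along the ray, hence exceeds $-t$ for large $t$. Therefore $te_k^{(n+1)}\in L_{k-1}^{(n)}\subset D_k^{(n+1)}$, which completes the inductive step.

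The main point requiring care is the leading-order asymptotics of $Q_j$ along the ray, which must include the nonvanishing of $Q_n$ for large $t$. This follows from the leading-term induction above, since the leading coefficient $\pm1$ never cancels.
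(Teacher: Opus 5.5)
Your proof is correct and follows essentially the same route as the paper: induction on $n$, the splitting $te_k^{(n+1)}=(te_{k-1}^{(n)},-t)$, placing that point in the hypograph of $\Phi_n|_{D_{k-1}^{(n)}}$ (which is contained in $D_k^{(n+1)}$), and handling $k=1$ via Lemma~\ref{kis} together with $\Phi_n(0,\dots,0)=-1$. The one difference is that you justify the limit $\Phi_n(te_{k-1}^{(n)})\to-2$ through the leading-order asymptotics $Q_{n-m+i}=(-t)^i+O(t^{i-1})$, whereas the paper simply asserts it.
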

\begin{proof} Induction on $n$.

The base of the induction, $n=1$.
We have
$$
te_1^{(1)}=-t, \quad t>0.
$$
Therefore, 
$$ te_1^{(1)}\subset (-\infty,-1)=D_1^{(1)} \quad t\ge 1,$$
 which justifies \eqref{mir} in the one-dimensional case $n=1$.

Now, suppose that \eqref{mir} holds for any $k=1, \dots, n$  for some space dimension $n$.  
Equivalently, 
$$
te_{k-1}^{(n)}\in D_{k-1}^{(n)}, \quad k=2, \dots, n+1,\quad t>0\quad  \text{large enough}.
$$

It follows,  
 \begin{equation}\label{milo}
(\underbrace{0,\dots,0}_{n-(k-1)\text{ times}}, \underbrace{-t, \dots,-t}_{(k-1)\text{ times}}, \Phi_n(te_{k-1}^{(n)}))\in 
\graph(\Phi_n|_{D^{(n)}_{k-1}}),
\end{equation}
where $\Phi_n$ is the rational function given by \eqref{defphi}.
Since 
 $$\lim_{t\to \infty}\Phi_n(te_{k-1}^{(n)}))=-2,
 $$
from \eqref{milo} we see that  possibly for a larger $t$ the point $te^{(n+1)}_k$
 belongs to the hypograph of  
 $\Phi_n|_{D^{(n)}_{k-1}}$, which, by Theorem \ref{decomm}, is a subset of $ D_k^{(n+1)}$. That is,
 $${\text{hypo}}(\Phi_n|_{D^{(n)}_{k-1}})\subset D_k^{(n+1)} ,$$ and hence  membership \eqref{mir} 
 takes place in the space dimension $n+1$  for all $2\le k\le n+1$.   
 
 In the remaining case, $k=1$, we argue as follows. 

By Lemma \ref{kis},  the origin is a point of $ D^{(n)}_{0}$.  Therefore,  
$$(\underbrace{0,\dots, 0}_{n},\Phi_n(0,\dots, 0))=e_1^{(n+1)}\in\graph(\Phi_n|_{D^{(n)}_{0}}).$$
Therefore, 
$$
te_1^{(n+1)}\in{\text{hypo}}(\Phi_n|_{D^{(n)}_{0}})\subset D_1^{(n+1)}, \quad t>1,
$$
which proves \eqref{mir} for $k=1$  in  the space dimension $n+1$.
\end{proof}

\section{A simple perturbation result}
 In what follows, we are interested in describing the location of the discrete spectrum for finite-dimensional perturbations of a Jacobi matrix in the large-coupling-constant limit. In this regime, the free Jacobi operator \(J\) is naturally regarded as a perturbation of a finite-dimensional potential \(V\). For our purposes, it is sufficient to consider interaction potentials of a special form. We therefore assume that \(V\) is a finite-rank, self-adjoint partial isometry, that is, the difference of two orthogonal projections.

\begin{lemma}\label{nulli}
Let $\cI$ and $\cJ$ be $k$- and $\ell$-element  disjoint   subsets  of   $\bbN$.  Denote by $P_k$ and $Q_\ell$ the orthogonal projections onto the subspaces 
$\text{span}_{i\in \cI}\{\delta_i\}$ and $\text{span}_{j \in \cJ}\{\delta_j\}$,
 respectively, with the agreement that $P_0=Q_0=0$.  Set 
 $$
 V=Q_\ell-P_k.
 $$
  
  Then for all    $t>8$ operator $J+tV$ has exactly $k$ eigenvalues below  and exactly $\ell$ eigenvalues above its continuous spectrum  $[0, 4]$.

\end{lemma}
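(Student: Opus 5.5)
We prove the lemma by perturbation theory, treating $J$ as a bounded perturbation of $tV$ (this is the large-coupling regime described at the start of the section), combined with a variational count. Write $H_t=J+tV$. Recall $\sigma(J)=\sigma_{ess}(J)=[0,4]$ and $\|J\|=4$. Since $V$ has finite rank, $\sigma_{ess}(H_t)=[0,4]$ for every $t$. The operator $tV$ has eigenvalue $-t$ of multiplicity $k$ (on $\operatorname{Ran}P_k$), eigenvalue $t$ of multiplicity $\ell$ (on $\operatorname{Ran}Q_\ell$), and eigenvalue $0$ on the infinite-dimensional complement.

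\emph{Step 1 (at least $k$ and $\ell$ eigenvalues).} We use the min--max principle with trial subspaces $\operatorname{Ran}P_k$ and $\operatorname{Ran}Q_\ell$.
\begin{itemize}
\item For a unit vector $f\in\operatorname{Ran}P_k$ we have $\langle H_tf,f\rangle=\langle Jf,f\rangle-t\le 4-t<0$ once $t>4$. Hence $H_t$ has at least $k$ eigenvalues, counted with multiplicity, below $4-t<0=\inf\sigma_{ess}$.
\item Symmetrically, for a unit vector $g\in\operatorname{Ran}Q_\ell$ we have $\langle H_tg,g\rangle\ge t>4=\sup\sigma_{ess}$. Hence $H_t$ has at least $\ell$ eigenvalues above $4$.
\end{itemize}

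\emph{Step 2 (at most $k$ below $0$).} Let $N=\ker P_k$, a subspace of codimension $k$. For a unit vector $f\in N$ we have $\langle H_tf,f\rangle=\langle Jf,f\rangle+t\|Q_\ell f\|^2\ge\langle Jf,f\rangle\ge 0$, because $J\ge0$. By the min--max (Glazman) lemma, $H_t$ has at most $k$ eigenvalues in $(-\infty,0)$. The argument above $4$ is the same: on $\ker Q_\ell$ we get $\langle H_tg,g\rangle\le\langle Jg,g\rangle\le 4$, so there are at most $\ell$ eigenvalues in $(4,\infty)$.

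Together, Steps 1 and 2 give exactly $k$ eigenvalues below $[0,4]$ and exactly $\ell$ above it for all $t>4$, hence in particular for $t>8$. The threshold $8$ in the statement is presumably chosen to leave room for an alternative argument, for example via Proposition \ref{pams} from subspace perturbation theory. That argument separates the spectral clusters near $\pm t$ from $[-4,4]\supset\sigma(J)$ using the bound $\|J\|=4<t/2$, which leaves a gap.

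\emph{Main point to check.} The only delicate step is the upper bound. The eigenvalue counts must be taken with multiplicity, and Glazman's lemma must be applied with the essential spectrum in view: an operator that is nonnegative on a subspace of codimension $k$ has at most $k$ spectral points, counting multiplicity, in $(-\infty,0)$. This holds because otherwise the spectral subspace $E_{H_t}((-\infty,0))$ would have dimension greater than $k$, so it would meet $N$ nontrivially, giving a vector with negative form value in $N$. That contradicts Step 2. Disjointness of $\mathcal I$ and $\mathcal J$ is used exactly here: it guarantees that $P_kQ_\ell=0$, so the form restricted to $N$ is $\langle Jf,f\rangle+t\|Q_\ell f\|^2$ as written.
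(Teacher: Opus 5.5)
Your proof is correct, and for the lower bound it takes a different route from the paper.

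\textbf{The paper's argument.} The paper regards $J$ as a perturbation of $tV$ and applies Proposition~\ref{pams} with $\sigma=\{-t,t\}$, $\Sigma=\{0\}$, $d=t$. The hypothesis $\|J\|=4<t/2$ is exactly where $t>8$ comes from, as you guessed. This gives $\|E_{tV}(\delta)-E_{J+tV}(\delta)\|<1$ for $\delta=(-\tfrac32t,-\tfrac12t)\cup(\tfrac12t,\tfrac32t)$. Hence $J+tV$ has at least $k+\ell$ eigenvalues outside $[0,4]$ in total. The paper then uses the operator inequalities $J-tP_k\le J+tV\le J+tQ_\ell$ to bound the counts below $0$ and above $4$ by $k$ and $\ell$. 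Comparing these bounds with the total gives the split.

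\textbf{Your argument.} Your Step~2 is the same upper bound, written directly in Glazman form. Your Step~1 replaces the subspace-perturbation theorem by a min--max test on $\operatorname{Ran}P_k$ and $\operatorname{Ran}Q_\ell$. This has three advantages:
\begin{itemize}
\item it gives the two lower bounds separately, so no bookkeeping of the total count is needed;
\item it works for all $t>4$;
\item combined with $-t\le J+tV\le t+4$, it places the eigenvalues in $[-t,4-t]$ and $[t,t+4]$, which is sharper than the window $\delta$.
\end{itemize}
What the paper's route illustrates instead is the generic large-coupling mechanism: a spectral cluster of $tV$ separated from the rest by more than $2\|J\|$ keeps its rank under the perturbation. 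That mechanism does not depend on having explicit trial subspaces.

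\textbf{A small correction.} Your closing remark places the use of disjointness of $\cI$ and $\cJ$ in the wrong step. It is not needed in Step~2: for $f\in\ker P_k$ the term $-t\langle P_kf,f\rangle$ vanishes regardless. It is needed in Step~1. There the identities $\langle H_tf,f\rangle=\langle Jf,f\rangle-t$ for $f\in\operatorname{Ran}P_k$ and $\langle H_tg,g\rangle=\langle Jg,g\rangle+t$ for $g\in\operatorname{Ran}Q_\ell$ require $Q_\ell f=0$ and $P_kg=0$.
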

\begin{proof}
First assume that $k, \ell>0$.

 Clearly, $\text{rank }(P_k)=k$,  $\text{rank }(Q_\ell)=\ell$ and $\text{rank }(V)=k+\ell$.
The spectrum   of the diagonal operator $tV$, $t>0$ is a three point set  $\{-t, 0, t\}$ with $-t$ an eigenvalue of multiplicity $k$, 
$t$ is an eigenvalue of multiplicity $\ell$ and 
zero  an eigenvalue of infinite multiplicity.
  Therefore, we have the splitting 
  $$\text{spec}(tV)=\sigma\cup\Sigma,
 $$
 where
 $$
 \sigma=\{-t, t\}, \quad \text{and} \quad  \Sigma=\{0\}.
 $$

 From   Proposition \ref{pams} it follows that if
 \begin{equation}\label{test}
\frac12\dist(\sigma,\Sigma)=\frac{t}{2}>\|J\|=4,
 \end{equation}
 which obviously holds true for  $t>8$,  for the difference of the corresponding spectral projections  we have the 
 norm estimate
 \begin{equation}\label{proj}
 \|E_{tV}(\delta)-E_{J+tV}(\delta )\|<1.
 \end{equation}
Here 
 \begin{equation}\label{dell}
 \delta=\left (\left (-\frac32t,- \frac12t\right )\cup\left (\frac12t, \frac32t\right )\right )\subset \bbR\setminus[0,4].
 \end{equation}
 From \eqref{proj} it follows that 
 $$
 k+\ell=\text{rank} (E_{tV}(\delta))=\text{rank}(E_{J+tV}(\delta )).
 $$
Therefore, in view of \eqref{dell},  the operator $J+tV$ has at least $k+\ell$ simple eigenvalues outside the interval $[0,4]$.
Since the total number of eigenvalues  outside  $[0,4]$ may not exceed the rank $k+\ell $ of the perturbation $tV$, we conclude that
the operator $J+tV$ has exactly  $k+\ell$ simple eigenvalues outside its continuous spectrim  $[0,4]$. 

To see that $J+tV$ has exactly $k$ negative
 eigenvalues, we argue as follows: since the operator inequality $J+tV\ge J-tP_k$ holds, we have
 $$
 \tr E_{J+tV}((-\infty, 0))\le  \tr E_{J-tP_k}((-\infty, 0))\le \text{rank }(P_k)=k.
 $$
 Analogously,  the operator inequality   $J+tV\le J+tQ_\ell$ implies
 $$
 \tr E_{J+tV}((4, \infty )\le   \text{rank }(Q_\ell)=\ell.
 $$
 However, we have shown that 
 $$
 \tr E_{J+tV}((-\infty, 0))+ \tr E_{J+tV}((4,\infty ))=k+\ell.
 $$
 Hence
 $$
 \tr E_{J+tV}((-\infty, 0))=k\quad \text{and}\quad  \tr E_{J+tV}((4,\infty ))=\ell.
 $$
 
 If $k$  vanishes and $\ell>0$ (or $\ell$  vanishes but $k>0$)  the proof   proceeds along  the same lines with the only difference being  that the set $\sigma=\{-t,0\}$   ($\sigma=\{0,t\}$) is now a two point set.

  Finally, if $k=\ell=0$, there is nothing to prove: the spectrum of the operator $J$ is absolutely continuous, there are no eigenvalues at all. 
  
 \end{proof}

\section{ The geometry of the critical variety and  the discrete spectrum }

Now we are ready to give a complete characterization of the operators $ J+V$ with 
a finite-rank potential V supported on $[1,\dots,n]$ that have a prescribed number of eigenvalues  to the right and to the left of its continuous spectrum.

We will adopt the following notation:  $$H_\mu=J+V_\mu,\quad \mu=(\mu_1, \dots, \mu_n)\in \bbR^n,$$
 where
\begin{equation}\label{tutit}
V_\mu=\text{diag}(2\mu_1,  \mu_2, \dots ,\mu_n,0,\dots). 
\end{equation}
We first consider the case where the continuous spectrum thresholds of the operator
$H_\mu$ are free of virtual levels.

\begin{theorem}\label{koko}
Assume  that  the operator
$H_\mu$
is noncritical.

Then 
\begin{itemize}
\item[(i)] $H_\mu$ has $k$  (simple) negative eigenvalues  for $ 0\le k\le  n$ if and only if
$$\mu\in  G
_k^{(n)};$$

\item[(ii)] $H_\mu$ has $\ell$
 (simple)  positive eigenvalues on the semi-axis $(4,\infty)$   for $ 0\le \ell\le n$ 
 if and only if
 $$-\mu\in  G
_{\ell}^{(n)};
$$
 \end{itemize}
 
 In particular, if   
 $H_\mu$ is non-critical and  has $k$ (simple) eigenvalues  below the threshold and $\ell$ simple eigenvalues above the threshold for some $k$ and $\ell$ such that $$0\le k+\ell\le n$$ 
  if and only if
 $$\mu\in G
_k^{(n)}\cap (-G
_\ell^{(n)}).$$

\end{theorem}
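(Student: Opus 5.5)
The strategy is to show that the number of negative eigenvalues is constant on each component, and then to compute that constant at one explicit point per component.

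\emph{Step 1 (constancy).} Consider the counting function $N_-(\mu)=\tr E_{H_\mu}((-\infty,0))$ on $\bbR^n$. By Theorem \ref{mu11}, $H_\mu$ is critical at the left threshold exactly when $C_n(\mu)=0$, that is, when $\mu\in{\bf V}(C_n)$. Hence $N_-$ is continuous at every point of $({\bf V}(C_n))^c$. Since $N_-$ is integer valued, it is locally constant there. By Theorem \ref{main00}, each $G_k^{(n)}$ is path-connected, so $N_-$ is constant on each $G_k^{(n)}$. Call this value $N_-(G_k^{(n)})$.

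\emph{Step 2 (evaluation at a test point).} By Lemma \ref{cone}, $te_k^{(n)}\in D_k^{(n)}$ for large $t$. Taking $t$ larger still, the shifted point $w_t:=te_k^{(n)}+(0,\dots,0,1)$ lies in $G_k^{(n)}$, by \eqref{concom}. I would check this membership as follows. The point $te_k^{(n)}$ sits at distance tending to infinity from ${\bf V}(Q_n)$ along the ray, because the graph values $\Phi_{n-1}$ converge along these rays (they tend to $-2$, or to $-1$ at the origin). So a unit shift in the last coordinate does not cross ${\bf V}(Q_n)$. This is the one point needing care; in the case $k=n$ the last coordinate itself diverges, so the ray escapes the relevant graph.

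Let $\cI=\{n-k+1,\dots,n\}$ and let $P$ be the projection onto $\text{span}_{i\in\cI}\{\delta_i\}$. The potential $V_{w_t}$ is then $-tP$ up to a bounded perturbation: at most $\delta_n$ is changed by $1$, and the first diagonal entry carries the factor $2$ if $k=n$. The counting argument of Lemma \ref{nulli} (Proposition \ref{pams} with $\sigma=\{-t\}$, $\Sigma=\{0\}$, $t$ large) goes through unchanged when $J$ is replaced by $J$ plus a bounded diagonal term. The factor $2$ on the first entry only rescales one eigenvalue of $tV$ and does not affect the gap condition. Therefore $N_-(w_t)=k$, and so $N_-=k$ on $G_k^{(n)}$.

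\emph{Step 3 (conclusion).} For (i): the sets $G_k^{(n)}$, $k=0,\dots,n$, are disjoint and cover the noncritical set, and they carry the distinct counts $0,\dots,n$. This gives the equivalence in both directions. Simplicity of the eigenvalues follows because the eigenvalues of a Jacobi matrix below $0$ are simple.

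For (ii), I would use the unitary $U=\text{diag}((-1)^k)$, for which $UJU=4-J$, so that $U H_\mu U=4-H_{-\mu}$. Eigenvalues of $H_\mu$ above $4$ then correspond to negative eigenvalues of $H_{-\mu}$. Right-threshold criticality of $H_\mu$ corresponds to left-threshold criticality of $H_{-\mu}$, consistent with the symmetry principle \eqref{symm}. Applying (i) to $-\mu$ yields (ii). The final assertion is the conjunction of (i) and (ii).

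The main obstacle is Step 2: guaranteeing that the test point genuinely lies in $G_k^{(n)}$ after the shift. I would handle it via the asymptotics of $\Phi_{n-1}$ along the rays $te_k^{(n-1)}$ established in the proof of Lemma \ref{cone}.
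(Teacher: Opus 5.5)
Your proof is correct and follows the paper's route. You show the negative-eigenvalue count is constant on each path-connected $G_k^{(n)}$ via Theorem \ref{mu11}, evaluate it at the large-coupling points of Lemma \ref{cone} through the argument of Lemma \ref{nulli}, and use a symmetry for (ii), where your unitary $U=\mathrm{diag}((-1)^k)$ with $UJU=4-J$ is the operator-level form of \eqref{symm}; your explicit treatment of the factor $2$ when $k=n$ fills in a point the paper passes over. Note, however, that the step you call the main obstacle is immediate, since by \eqref{concom} $w_t\in G_k^{(n)}$ is equivalent to $te_k^{(n)}\in D_k^{(n)}$, which is Lemma \ref{cone} itself. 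Your vertical-distance check is what the paper's unshifted point $te_k^{(n)}$ would need, not yours. Also, $k=0$ lies outside Lemma \ref{cone} and should be handled via Lemma \ref{kis} at $(0,\dots,0,1)$, where $V_\mu\ge 0$ rules out negative eigenvalues.
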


\begin{proof} 
Notice   that if $\mu,\nu\in  G_k^{(n)}$, then  the number of eigenvalues of  the operators  $H_\mu$ and $H_\nu$ below the threshold $\lambda=0$  remains the same. Indeed, since  $G_k^{(n)}$ is a connected set, one can find a path $[0,1]\ni t \mapsto \mu_t$  with 
 the end points at $\mu$ and $\nu$ such that $C_n(\mu_t)\ne 0$ along the path. Therefore,   the operators $H_{\mu_t}$ are not critical along the path. Since the pass is a compact, the number of negative eigenvalues $n_-(H_\mu)$ (counting multiplicity) of $ H_\mu$ remains constant along the path. Therefore,
 $$n_-(H_{\mu})=n_-(H_{\nu}).$$

(i). First, we show that for  $\mu\in  G
_k^{(n)},$ $ k=0,1,\dots, n$ the operator $H_\mu$ has $k$ simple negative eigenvalues.

Assume  that $k=0$.

 By Lemma \ref{kis}, $
\bbR^n\ni(0, \dots,0)\in  D_0^{(n)},
$
which implies the membership
$$
\mu_0=(0, \dots,0,1)\in  G_0^{(n)}.
$$
In particular, $H_{\mu_0}$ being a rank one non-negative perturbation of  
the free operator $J$, has no negative eigenvalues.
 Therefore, as explained above, 
 for every  $\mu$ from  the connected component $G_0^{(n)}$ the discrete spectrum of $H_\mu$ on $(-\infty, 0)$ is empty.

 Now assume that $1\le k\le n$.

 In view of \eqref{concom}, from Lemma  \ref{cone} it follows that the point $$\mu_t=(\underbrace{0,\dots, 0}_{n-k\text{ times}}, -t, \dots, -t)$$ belongs to the connected component 
 $G_k^{(n)}$  for  $t $ large enough. Without loss we may assume that $t>8$. In this case,  from  Lemma \ref{nulli} it follows that the operator  $H_{\mu_t}$ has exactly $k$ negative eigenvalues. Therefore, for every $\mu$ from the connected component $G_k^{(n)}$
 the operator $H_{\mu}$ has exactly $k$ negative eigenvalues.

Conversely, suppose that  $H_{\mu}$ has exactly $k$ negative eigenvalues. By the hypothesis, 
  $H_{\mu}$ is not critical. From  Theorem \ref{mu11}, the Jost function $C_n$ does bot vanish, and hence, $\mu\in G_m^{(n)}$ for some $m=0, 1, \dots, n$ as we see it  from \eqref{split} (Theorem \ref{main00}). Clearly, $m=k$,
  completing the proof of  (i).
  
 (ii).
 The second assertion of the theorem then follows from (i) by applying the Jost function Symmetry Principle \eqref{symm}: the operator
 $H_\mu=J+V_\mu$ has exactly $\ell$ eigenvalues below   the lower  threshold $\lambda=0$ if and only iff
  the operator $H_\mu=J-V_\mu$
has exactly $\ell$ eigenvalues above  the upper threshold $\lambda=4$.

\end{proof}
                           
 It remains to discuss the case of a critical  operator $H_\mu$  is critical, i.e., when a virtual level occurs at one or both thresholds.

\begin{theorem}\label{lala}
Suppose that the operator $H_\mu=J+V_\mu$   has a virtual level at its left threshold. Then there is a unique $k$, $0\le k\le n-1$ such that 
\be\label{mama}
\mu\in\graph\left ((\Phi_{n-1}+1)|_{D^{(n-1)}_k}\right ),
\ee
 where $\Phi_{n-1}$ is the rational function from \eqref{defphi} and 
 $D^{(n-1)}_k$
 is a connected component referred to in Theorem \ref{decomm}.

In this case, the operator $H_\mu$ has a threshold resonance  at $\lambda=0$ and  exactly $k$ simple eigenvalues on the negative real axis.

\end{theorem}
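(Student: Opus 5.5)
The plan has three steps: locate $\mu$ on the critical variety, identify the number of negative eigenvalues via the open component lying \emph{below} that sheet, and show that this count does not jump at the sheet.

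\textbf{Locating $\mu$.} By Theorem \ref{mu11}, a virtual level at the left threshold means $C_n(\mu)=0$. By Theorem \ref{main00}, ${\bf V}(C_n)$ is the disjoint union of the graphs of $(\Phi_{n-1}+1)|_{D^{(n-1)}_k}$, $k=0,\dots,n-1$. Hence $\mu$ lies on exactly one of them, which gives \eqref{mama} and the uniqueness of $k$. The resonance statement is the last part of Theorem \ref{mu11}: the Jost solution $\Psi^{(\ell)}$ is bounded, equals $1$ for large indices, and is not in $\ell^2$.

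\textbf{Counting eigenvalues.} Write $\mu=(\mu',\mu_n)$ with $\mu'\in D^{(n-1)}_k$ and $\mu_n=\Phi_{n-1}(\mu')+1$. I would use monotonicity in the last coordinate: $\partial C_n/\partial\mu_n=Q_{n-1}(\mu')\neq 0$, and increasing $\mu_n$ increases $V_\mu$, so $\mu_n\mapsto n_-(H_{(\mu',\mu_n)})$ is nonincreasing. Let $\epsilon>0$ be small. The point $(\mu',\mu_n+\epsilon)$ lies in the translated epigraph over $D^{(n-1)}_k$, which is part of $G^{(n)}_k$ by Theorem \ref{decomm} (the $U_k^{(n-1)}$ piece of $D^{(n)}_k$) together with \eqref{concom}. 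The point $(\mu',\mu_n-\epsilon)$ lies in the translated hypograph, which is part of $G^{(n)}_{k+1}$ (the $L_k^{(n-1)}$ piece). By Theorem \ref{koko}, $H$ therefore has $k$ negative eigenvalues just above the sheet and $k+1$ just below it.

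\textbf{Continuity at the sheet.} It remains to show that $n_-(H_\mu)=k$, i.e.\ that the count at $\mu$ equals the count from above. The tool is Hurwitz's theorem applied to the Jost polynomial $\Theta\mapsto j_0^{(n)}(\Theta)$ on $[0,1]$, using the fact from the proof of Theorem \ref{mu11} that zeros in $(0,1)$ correspond to negative eigenvalues. Zeros in the open interval $(0,1)$, which stay away from $\Theta=1$, persist under small perturbation. The only zero that can move is the one at $\Theta=1$, and it is simple: by the symmetry and self-adjointness there is no threshold eigenvalue, and the Riemann-surface picture shows that a zero at $1$ is simple (its multiplicity can be checked through $\partial_\Theta j_0^{(n)}$). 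Along the segment in $\mu_n$, this simple zero crosses $\Theta=1$ transversally, because $\partial_{\mu_n}C_n\neq0$. It enters $(0,1)$ on exactly one side, namely the side with $k+1$ eigenvalues. Hence at $\mu$ itself the zero sits at $1$ rather than inside $(0,1)$, and $n_-(H_\mu)$ equals the count on the other side, which is $k$.

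\textbf{Main obstacle.} The hard step is this last one: excluding zeros of $j_0^{(n)}$ at $\Theta=1$ of higher multiplicity, and showing the count at $\mu$ agrees with the side above rather than below. A cleaner alternative, which I would try first, is a variational argument. Since $n_-$ is lower semicontinuous under norm perturbations, $n_-(H_\mu)\le\liminf$ of the counts on both sides, so $n_-(H_\mu)\le k$. In the other direction, the operator inequality $H_\mu\le H_{(\mu',\mu_n+\epsilon)}$ gives $n_-(H_\mu)\ge n_-(H_{(\mu',\mu_n+\epsilon)})=k$. Together these give exactly $k$, and this route avoids the multiplicity issue entirely.
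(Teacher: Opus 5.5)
Your proposal is correct. The variational route you say you would try first is exactly the paper's proof: place $\mu$ on a single sheet via Theorems \ref{mu11} and \ref{main00}; use $\mu_\varepsilon=(\mu',\mu_n+\varepsilon)\in G_k^{(n)}$ and Theorem \ref{koko} to get $n_-(H_{\mu_\varepsilon})=k$; then combine $H_\mu\le H_{\mu_\varepsilon}$ (giving $n_-(H_\mu)\ge k$) with stability of the isolated negative eigenvalues of $H_\mu$ for small $\varepsilon$ (giving $n_-(H_\mu)\le k$). The Hurwitz/multiplicity detour and the count $k+1$ below the sheet are not needed.
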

\begin{proof} 
Since $H_\mu$ is critical by hypothesis, Theorem~\ref{mu11} implies that
$\mu \in {\bf V}(C_n)$. By Theorem \ref{main00},
$$
{\bf V}(C_n)=\bigcup_{k=0}^{n-1} \graph\left ((\Phi_{n-1}+1)|_{D^{(n-1)}_k}\right ),
$$
and therefore  \eqref{mama} holds for some $k$, $0\le k\le n-1$. That is, 
$$
(\mu_1, \dots, \mu_{n-1})\in D^{(n-1)}_k
$$
and 
\be\label{epii}\mu_n=\Phi_{n-1}(\mu_1, \dots, \mu_{n-1})+1.\ee

 Using  \eqref{epii}, we see that  for any  $\varepsilon >0$,
 the point $$
\mu_\varepsilon=(\mu_1, \dots, \mu_{n-1}, \mu_n+\varepsilon)
$$
 belongs to the ephigraph   of  the function  $\Phi_{n-1}+1$ restricted to $D^{(n-1)}_k$.
 From Theorem \ref{decomm} it follows that   the ephigraph of 
  $(\Phi_{n-1}+1)|_{ D^{(n-1)}_k}$ is a subset of  the connected  component $D_k^{(n)}$ shifted by the vector 
  $(\underbrace{0,\dots, 0}_{(n-1)\text{ times}},1)$. Taking into account  \eqref{concom},
 we conclude that      for any $\varepsilon>0$ we have the membership
 $\mu_\varepsilon\in  G^{(n)}_k$.
Therefore, by Theorem~\ref{koko}, the operator $H_{\mu_\varepsilon}=J+V_{\mu_\varepsilon}$ has exactly $k$ negative eigenvalues; that is,
$$
n_-(H_{\mu_\varepsilon})=k.
$$ On the other hand,  $H_{\mu_\varepsilon}$
 is a positive (rank one) perturbation of  $H_{\mu}$, and  hence, 
the number of  negative eigenvalues of   $H_{\mu_\varepsilon}$ can at most  decrease.
That is,
$$
n_-(H_{\mu})\ge n_-(H_{\mu_\varepsilon})=k.
$$
Next, choosing  $\varepsilon$  
smaller than the distance  of the nearest negative eigenvalue  of $H_{\mu}$  to the threshold at $\lambda=0$,
we also see that the reverse inequality
$$
n_-(H_{\mu})\le n_-(H_{\mu_\varepsilon})=k
$$
holds.

 Therefore,
$$n_-(H_{\mu})=k,
$$
which completes the proof.

\end{proof}

\begin{remark}\label{posle}
As far as the left threshold $\lambda=0$ is concerned, the results of  Theorem  \ref{koko} (i) and  Theorem \ref{lala} can be summarized as follows. 
\begin{itemize}
\item[]The zero set of the polynomial $C_n$ decomposes the parameter space of the interaction potentials  into $n$ smooth hypersurfaces. On each hypersurface, the operator becomes critical: one of its negative eigenvalues reaches the lower threshold, leading to the formation of a virtual state.
When the parameter crosses such a hypersurface, the spectral characteristics of the operator change discretely: the number of negative eigenvalues varies by exactly one. 
\end{itemize}
More precisely, each critical hypersurface is the graph of a function, which naturally determines its orientation and distinguishes the lower and upper sides. Passing the parameter from the lower to the upper side corresponds to a negative eigenvalue being absorbed by the continuous spectrum.
\end{remark}

 An analogous description applies to the right threshold $\lambda=4$.

\begin{theorem}\label{rara}
Suppose that   $H_\mu=J+V_\mu$  has a virtual level  at its right  threshold, then   there  is a unique $\ell$, $0\le \ell\le n-1$ such that 
$$
-\mu\in\graph\left ((\Phi_{n-1}+1)|_{D^{(n-1)}_\ell}\right ).
$$
In this case, the operator $H_\mu$  has  a threshold resonance  at $\lambda=4$ and  exactly $\ell$ simple eigenvalues on the semi-axis $(4,\infty)$.
\end{theorem}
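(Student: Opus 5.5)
The plan is to reduce Theorem \ref{rara} to Theorem \ref{lala} by the Jost function Symmetry Principle \eqref{symm}, applied to the pair $\mu\mapsto-\mu$. I work throughout with the operators $H_\mu=J+V_\mu$ and $H_{-\mu}=J+V_{-\mu}=J-V_\mu$. Note that $V_{-\mu}=-V_\mu$ by \eqref{tutit}, since the rescaling $v_1=2\mu_1$ is linear.

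\emph{Step 1: critical sets correspond.} By Theorem \ref{mu11}, $H_\mu$ has a virtual level at $\lambda=4$ if and only if $j_0^{(n)}(V_\mu;-1)=0$. By \eqref{symm},
\[
j_0^{(n)}(V_\mu;-1)=j_0^{(n)}(-V_\mu;1)=j_0^{(n)}(V_{-\mu};1)=C_n(-\mu).
\]
Applying Theorem \ref{mu11} again, this means that $H_{-\mu}$ is critical at the left threshold. Theorem \ref{lala} then gives a unique $\ell$, $0\le\ell\le n-1$, with
\[
-\mu\in\graph\bigl((\Phi_{n-1}+1)|_{D^{(n-1)}_\ell}\bigr),
\]
and it also gives $n_-(H_{-\mu})=\ell$. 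Uniqueness of $\ell$ follows because the graphs over the disjoint components $D^{(n-1)}_\ell$ are disjoint (Theorem \ref{main00}).

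\emph{Step 2: transfer the eigenvalue count.} I need to show that $\tr E_{H_\mu}((4,\infty))=n_-(H_{-\mu})$. There are two routes.

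The first route repeats the proof of Theorem \ref{lala} directly at the right threshold. Set $\mu_\varepsilon=\mu-\varepsilon e_n$, so that $-\mu_\varepsilon$ lies in the epigraph of $(\Phi_{n-1}+1)|_{D^{(n-1)}_\ell}$, which is contained in $G_\ell^{(n)}$. Theorem \ref{koko}(ii), whose noncriticality hypothesis at the right threshold holds since $C_n(-\mu_\varepsilon)\neq0$, then gives exactly $\ell$ eigenvalues of $H_{\mu_\varepsilon}$ above $4$. Now $H_{\mu_\varepsilon}=H_\mu-\varepsilon\,\delta_n\langle\cdot,\delta_n\rangle$ is a negative rank-one perturbation of $H_\mu$. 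Hence the number of eigenvalues above $4$ can only decrease, which gives $\tr E_{H_\mu}((4,\infty))\ge\ell$. For the reverse inequality, take $\varepsilon$ smaller than the distance from $4$ to the nearest eigenvalue of $H_\mu$ in $(4,\infty)$; by norm continuity of isolated eigenvalues, none of them crosses into $[0,4]$, so the count is at most $\ell$.

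The second, cleaner route is structural. Let $U=\mathrm{diag}((-1)^k)$. Then $U J U=4I-J$, because the off-diagonal entries change sign while the diagonal entries $2$ are preserved. Consequently
\[
U H_\mu U=4I-(J-V_\mu)=4I-H_{-\mu}.
\]
So the eigenvalues of $H_\mu$ in $(4,\infty)$ correspond bijectively, via $\lambda\mapsto4-\lambda$, to the eigenvalues of $H_{-\mu}$ in $(-\infty,0)$, and simplicity is preserved. This yields exactly $\ell$ simple eigenvalues above $4$ immediately.

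\emph{Step 3: the resonance statement.} The threshold resonance at $\lambda=4$, with bounded solution $\Psi^{(r)}_k=(-1)^k$ for large $k$, comes straight from Theorem \ref{mu11}. It is also consistent with conjugating by $U$ the left virtual state of $H_{-\mu}$.

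The only delicate point is the sign bookkeeping. One has to check that $V_{-\mu}=-V_\mu$ under the convention $v_1=2\mu_1$, and that the epigraph/hypograph orientation in Step 2 is consistent with $\mu\mapsto-\mu$. I expect the unitary-conjugation identity to avoid most of this, so I would present that route.
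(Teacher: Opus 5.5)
Your proposal is correct and follows the paper's intended argument. The paper gives no separate proof of Theorem~\ref{rara}; it only says the right threshold is analogous, meaning one reduces to Theorem~\ref{lala} via $\mu\mapsto-\mu$ and the symmetry principle \eqref{symm}, as in Theorem~\ref{koko}(ii), which is exactly your Step~1 with Route~1. Your Route~2, the conjugation $UJU=4I-J$ with $U=\mathrm{diag}((-1)^k)$ giving $UH_\mu U=4I-H_{-\mu}$, is a real gain in rigor: it transfers the eigenvalue count, and even yields \eqref{symm} itself since $(-1)^kj_k$ is the Jost solution of $J-V$ at $4-z$ with $\Theta(4-z)=-\Theta(z)$, without the unproved correspondence between Jost-function zeros in $(-1,0)$ and eigenvalues above $4$.
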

\begin{remark}
Just as in Remark \ref{posle}, one can formally describe the creation and annihilation of eigenvalues associated  at the right threshold. Moreover, the description remains valid in the presence of a virtual level at both  the right and  left thresholds. The only difference is that a more detailed analysis  is required of the intersection geometry
of the affine variety ${\bf V}(C_nC_n^*)$ (see the Jost Function Symmetry/Reflection Principle \eqref{symm}),  where the polynomial $C_n^*$ is defined as
$$
C_n^*(\mu)=C_n(-\mu), \quad \mu\in \bbR^n.
$$
\end{remark} 

In conclusion, notice that  from an analytic point of view, the formation of a virtual level at a threshold, as the perturbation parameters vary, is related to the collision of a zero of the determinant,  being a rational function of the local parameter $\Theta$,  with its pole at the threshold, which ultimately removes the threshold singularity of the perturbation determinant.
 Notice that if virtual levels occur at {\sc both edges} of the continuous spectrum, then the perturbation determinant has removable singularities at both threshold points and therefore the perturbation determinant becomes a polynomial in $\Theta$.

\section*{Acknowledgement}
This work has been started when the first author (SL) visited
the University of Missouri, Columbia, as a Miller scholar in  November and December
of 2024. He is grateful to the Department of Mathematics for its hospitality. SL also acknowledge support of this research by Ministry of Higher Education, Science and Innovation of the Republic of Uzbekistan (Grant No. FL-9524115052). 
We gratefully acknowledge M. O. Akhmadova and S. I. Fedorov for valuable discussions.
\appendix

\section{The Jost function and  perturbabtion determinant. Proofs}
\subsection{Proof of Proposition \ref{propjost}}\label{A1}
\begin{proof} Assume first that $n\ge 3$. From the definition of the  Jost solution $\{j_k^{(n)}\}_{k=0}^\infty$ it follows 
that its first $n+1$ components  $j_0^{(n)}, j_1^{(n)}, \dots j_n^{(n)}$ satisfy the system of equations 
 \begin{equation}\label{matsys}\begin{pmatrix}
-\sqrt{2}&m_1&-\sqrt{2}&0&\dots&0\\
\dots&-\sqrt{2}&m_2&-1&\dots&\dots
\\ 0&\dots&-1&m_3&-1&\dots
\\
\dots&\dots&\dots&\dots&\dots&\dots\\
0&\dots&\dots&\dots&\dots&-1
\\0&\dots&&&-1&m_{n}
\\0&\dots&&&0&1
\end{pmatrix}
\begin{pmatrix}
j_0^{(n)}\\
j_1^{(n)}\\
\dots\\
\dots\\
\dots\\
j_{n-1}^{(n)}\\
j_{n}^{(n)}
\end{pmatrix}=\begin{pmatrix}
0\\
0\\
\dots\\
\dots\\
0\\
\Theta^{n+1}(z)
\\
\Theta^n(z) 
\end{pmatrix},
\end{equation}
where
\be\label{cfcf}
m_k=2-z+v_k.
\ee
Let $M_{ij}$ denote the minor of the entry in the $i$-th row and $j$-th column of the matrix of the system \eqref{matsys}.
Since the first row of the inverse matrix of the system  consists of the cofactors of the first column 
of the system matrix divided by its determinant, we explicitly get 
\begin{align*}
j_0^{(n)}&=
\frac{1}{2(-1)^n}(\Theta^{n+1}(z)M_{n1}(-1)^{n+1} +\Theta^n(z)M_{(n+1)1}(-1)^{n+2}))
\\&=\frac{ \Theta(z)^n}{2}(M_{(n+1)1}-\Theta(z)M_{n1}).
\end{align*}

Clearly, 
$$
M_{(k+1)1}=q_k,\quad k=n, n+1, 
$$
where
$q_k$ are the   polynomials (in $z$ and $v_1, \dots, v_k$) given by 
\begin{equation}\label{poli}
q_k=\begin{vmatrix}
2-z+v_1&-\sqrt{2}&0&\dots&0\\
-\sqrt{2}&2-z+v_2&-1&\dots&\dots
\\0&-1&2-z+v_3&-1&\dots
\\
\dots&\dots&\dots&\dots&\dots&\\
\dots&\dots&\dots&\dots&-1&
\\0&\dots&\dots&-1&2-z+v_k
\end{vmatrix}.
\end{equation}
Thus, for the Jost function  $j_0^{(n)}$ we obtain  the rerpesentation
$$
j_0^{(n)}=\frac{ \Theta(z)^n}{2}(q_n-\Theta(z)q_{n-1}), \quad n\ge 3.
$$

Expanding the determinant \eqref{poli} by minors on the last row yields the recurrence
\be\label{simm}
q_{k+1}=(2-z+v_{k+1})q_{k}-q_{k-1},\quad k\ge 2.
\ee

If $n=2$, for determining the Jost function  $j_{0}^{(2)}$ we have the system of equations
\be\label{matn=2}\begin{pmatrix}
-\sqrt{2}&m_1&-\sqrt{2}\\
0&-\sqrt{2}&m_2
\\ 
0&0&1
\end{pmatrix}
\begin{pmatrix}
j_0^{(2)}\\
j_1^{(2)}\\
j_{2}^{(2)}\\
\end{pmatrix}=\begin{pmatrix}
0\\
\Theta^{3}(z)
\\
\Theta^2(z) 
\end{pmatrix}.
\ee
Solving \eqref{matn=2} for $j_0^{(2)}$
yields
$$
j_0^{(2)}=
\frac{ \Theta(z)^2}{2}(m_1m_2-2-\Theta(z)m_1).
$$
Therefore,
$$
j_0^{(2)}=\frac{ \Theta(z)^2}{2}(q_2-\Theta(z)q_1),
$$
where 
$$
q_1=m_1=2-z+v_1
$$
and 
$$q_2=m_1m_2-2= (2-z+v_2)q_1-q_0
$$
with 
$$
q_0=2,
$$
which also shows that \eqref{simm} holds for $k=1$.

The obtained representations prove \eqref{jn} for $n\ge 2$.

One also observes that  for $n=1$ the Jost function $ j_0^{(1)}$ can be recovered from the obvious relation
$$
j_0^{(1)}=j_0^{(2)}|_{v_2=0}.
$$
Having this in mind and  using \eqref{guk},  we see that 
\begin{align*}
j_0^{(1)}&=\frac{ \Theta(z)^2}{2}((m_1(2-z)-2-\Theta(z)m_1)
\\&=\frac{ \Theta(z)^2}{2}
\left (\frac{m_1}{\Theta(z)} -2 \right )
=\frac{ \Theta(z)}{2}(m_1-2\Theta(z))
\\&=\frac{ \Theta(z)}{2}(q_1-\Theta(z)q_0),
\end{align*}
which also shows that \eqref{jn} takes place for $n=1$ as well.

For the Jost function  associated with  the unperturbed Jacobi matrix we also obtain
$$
j_0^{(0)}=j_0^{(1)}|_{v_1=0}=\frac{ \Theta(z)}{2}(2-z-2\Theta(z))=\frac{1-\Theta^2(z)}{2},
$$
which competes the proof of the proposition.

 \end{proof}

\subsection{Proof of Proposition \ref{propdet}}\label{A2}
\begin{proof} Let $\delta_1, \delta_2, \dots$ be the standard basis in the Hilbert space $\ell^2(\bbN)$.
Since the operator $J_1$ is a rank one perturbation of $J$, for the  perturbation determinant 
$  \text{det}_{J_1/J}(z)$  associated with the pair $(J, J_1)$ we have 
$$
\text{det}_{J_1/J}(z)=1+v_1((J-z)^{-1}\delta_1,\delta_1), \quad \Im(z)\ne0.
$$

The $m$-function $$f_1= ((J-z)^{-1}\delta_1,\delta_1), \quad \Im(z)\ne 0, 
$$ can easily be recovered by solving the system of equations
\begin{equation}\label{green2}\begin{pmatrix}
2-z&-\sqrt{2}
\\
-\sqrt{2}&2-z-\Theta
\end{pmatrix}
\begin{pmatrix}
f_1\\
f_2\\
\end{pmatrix}
=\begin{pmatrix}
1\\
0\\
\end{pmatrix},
\end{equation}
where $\Theta=\Theta(z)$ is a root of  \eqref{guk} such that $|\Theta(z)|<1$ for $\Im(z)\ne 0$.
Solving \eqref{green2} we obtain  
$$
f_1=\frac{2-z-\Theta}{(2-z)(2-z-\Theta)-2}=
\frac{\Theta}{1-\Theta^2}
$$
and hence
\begin{equation}\label{kkk}
\text{det}_{J_1/J}(z)=1+v_1f_1=\frac{1+v_1\Theta-\Theta^2}{1-\Theta^2}
.\end{equation}
Using \eqref{guk}, we see that 
$$
1+v_1\Theta-\Theta^2=\Theta\left (\frac1\Theta+v_1-\Theta\right )
=\Theta (2-z+v_1-2\Theta),
$$
which, due to \eqref{indat}, amounts to 
\begin{equation}\label{ww}
1+v_1\Theta-\Theta^2=\Theta (q_1-\Theta q_0).
\end{equation}

Therefore, 
$$
\text{det}_{J_1/J}(z)= \frac{\Theta(q_1-\Theta q_0)}{1-\Theta^2}=\frac{2}{1-\Theta^2}\frac{\Theta(q_1-\Theta q_0)}{2}
.
$$
By Proposition \ref{propjost},
$$
\frac{\Theta(q_1-\Theta q_0)}{2}=j_0^{(1)}(z),
$$
hence
\be\label{comb1}
\text{det}_{J_1/J}(z)=\frac{2}{1-\Theta^2}\cdot j_0^{(1)}(z)=\frac{ j_0^{(1)}(z) }{j_0^{(0)}(z)},
\ee
which  proves \eqref{vita} for $n=1$.

More generally, one shows that the first $n$ components $f_1,f_2, \dots f_n$ from the expansion 
$$
(J_{n-1}-z)^{-1}\delta_{n}=\sum_{k=1}^\infty f_k\delta_k, \quad \Im(z)\ne 0, 
$$
can be determined by  solving the system of equations

\begin{equation}\label{green}\begin{pmatrix}
m_1&-\sqrt{2}&0&\dots&\dots&0
\\
-\sqrt{2}&m_2&-1&0&\dots&\dots
\\ 
0&-1&m_3&-1&\dots&\dots
\\
\dots&\dots&\dots&\dots&\dots&\dots
\\
\dots&\dots&0&-1& m_{n-1}&-1
\\
0&\dots&\dots&0 &-1&2-z-\Theta
\end{pmatrix}
\begin{pmatrix}
f_1\\
f_2\\
\dots\\
\dots\\
\dots\\
f_n
\end{pmatrix}=\begin{pmatrix}
0\\
0\\
\dots\\
\dots\\
0\\
1
\end{pmatrix},
\end{equation}
where
 the diagonal elements $m_k$   $m_k$, $k=1, \dots, n-1$ of the system matrix are given by  (cf. \eqref{cfcf})
$$
m_k=2-z+v_k.
$$
Notice that 
$$
f_n=((J_{n-1}-z)^{-1}\delta_{n},\delta_{n}).
$$

The cofactor corresponding to the $(n, n)$-entry of the system matrix is clearly given by the polynomial
$q_{n-1}$ given by \eqref{poli} and its determinant equals $(2-z-\Theta)q_{n-1}-q_{n-2}=\frac1\Theta q_{n-1}-q_{n-2}\ne 0$
which yeilds the representation yields
$$
f_n=\frac{q_{n-1}}{\frac1\Theta q_{n-1}-q_{n-2}}.
$$
Therefore, 
$$
\text{det}_{J_n/J_{n-1}}(z)=1+v_n((J_{n-1}-z)^{-1}\delta_{n},\delta_{n})=1+v_n f_n.
$$

More explicitly, 
\begin{align*}
\text{det}_{J_n/J_{n-1}}&=1+v_nf_n=1+v_n\frac{ q_{n-1}}{\frac1\Theta q_{n-1}-q_{n-2}}
\\&=
\frac{(\frac1\Theta +\Theta +v_n)q_{n-1} -q_{n-2}-\Theta q_{n-1}}{\frac1\Theta q_{n-1}-q_{n-2}}
\\&
=
\frac{(2-z+v_n)q_{n-1} -q_{n-2}-\Theta q_{n-1}}{\frac1\Theta q_{n-1}-q_{n-2}}.
\end{align*}
Since by \eqref{recq},
$$
q_n=(2-z+v_n)q_{n-1}-q_{n-2},
$$
it follows
\be\label{comb2}
\text{det}_{J_n/J_{n-1}}
=\Theta \frac{q_{n}- \Theta q_{n-1}}{q_{n-1}-\Theta q_{n-2}}, \quad n\ge 2.
\ee
Using the multiplicativity property for perturbation determinants.
$$
\text{det}_{J_n/J}=
\prod_{k=2}^n\text{det}_{J_k/J_{k-1}} \cdot\text{det}_{J_1/J},
$$
and combining \eqref{comb1} and \eqref{comb2}, we  obtain
\begin{align*}
\text{det}_{J_n/J}
=&\Theta^{n-1}\frac{q_n-\Theta q_{n-1}}
{q_1-\Theta q_0}\cdot \frac{ j_0^{(1)}(\Theta)}{j_0^{(0)}(\Theta)}
\\&=\Theta^{n-1}\frac{q_n-\Theta q_{n-1}}
{q_1-\Theta q_0}\cdot \frac{\Theta(q_1-\Theta q_0)}{2}\frac1{j_0^{(0)}(\Theta)}
\\&
=\frac{\Theta^n (q_n-\Theta q_{n-1})}{2j_0^{(0)}(\Theta)}
=\frac{j_0^{(n)}(\Theta)}{j_0^{(0)}(\Theta)}.
\end{align*}
This proves \eqref{vita} for $n>1$.

\end{proof}
\subsection{A lemma from geometric perturbations theory}

Recall  the following proposition from the geometric perturbation theory.

\begin{proposition}[\cite{PAMS}]\label{pams}
Assume that $A $ and $V$ are bounded self-adjoint
 operators on a
separable Hilbert space $H$.
 
  Suppose that the spectrum  of $A$ has a part 
  $\sigma$ separated
from the remainder of the  spectrum $\Sigma$ in the sense that
  $$
  \text{spec}(A)=\sigma\cup \Sigma, \quad \text{dist}(\sigma,\Sigma)=d>0
  $$

Introduce the orthogonal projections $ P = E_A(\sigma)$ and $Q = E_{A+V}(U_{d/2}(\sigma))$,
 where
$U_\varepsilon(\sigma)$, $\varepsilon> 0$,
 is the open $\varepsilon$-neighborhood of the set $\sigma$.
 Here $E_{A}(\Delta)$  and $E_{A+V}(\Delta)$
denote the spectral projections for operators $A$ and $A+V$, respectively, corresponding
to a Borel set $\Delta\subset \bbR$.

 Assume that 
 $$\|V\|<\frac12d
 $$
 and
 $$ \text{conv.hull}(\sigma)\cap\Sigma=\emptyset \quad \text{or}\quad
  \text{conv.hull}(\Sigma)\cap\sigma=\emptyset. 
 $$
 Then 
 $$
 \|P-Q\|<1.
 $$
\end{proposition}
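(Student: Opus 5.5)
The plan is to bypass any homotopy argument in the coupling constant. Instead, we reduce $\|P-Q\|<1$ to two one-sided estimates, each controlled by the Davis--Kahan $\sin\Theta$ theorem. We use the identity, valid for any two orthogonal projections,
$$
\|P-Q\|=\max\bigl\{\|P(I-Q)\|,\ \|(I-P)Q\|\bigr\},
$$
so it suffices to show that both quantities are strictly less than $1$.

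\emph{Step 1: localize the spectrum of $A+V$.} Since $\|V\|<d/2$, every point of $\mathrm{spec}(A+V)$ lies within distance $\|V\|$ of $\mathrm{spec}(A)=\sigma\cup\Sigma$. A point within $\|V\|$ of $\Sigma$ is at distance at least $d-\|V\|>d/2$ from $\sigma$. Hence $U_{d/2}(\sigma)$ captures exactly the part of $\mathrm{spec}(A+V)$ lying near $\sigma$. Set $\sigma'=\mathrm{spec}(A+V)\cap U_{d/2}(\sigma)$ and $\Sigma'=\mathrm{spec}(A+V)\setminus U_{d/2}(\sigma)$. Then $\Sigma'\subset U_{\|V\|}(\Sigma)$ and $\sigma'\subset U_{\|V\|}(\sigma)$. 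Moreover $Q=E_{A+V}(\sigma')$ and $I-Q=E_{A+V}(\Sigma')$, and these are spectral subspaces for closed parts of the spectrum.

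\emph{Step 2: apply the $\sin\Theta$ theorem twice.} Suppose, say, $\mathrm{conv.hull}(\sigma)\cap\Sigma=\emptyset$, so that $\sigma\subset[a,b]$ and $\Sigma\cap(a-d,b+d)=\emptyset$. Then $\Sigma'$ avoids $(a-\delta,b+\delta)$ with $\delta=d-\|V\|$. The Davis--Kahan $\sin\Theta$ theorem, applied to the pair $A|_{\mathrm{Ran}P}$ with spectrum in $[a,b]$ and $(A+V)|_{\mathrm{Ran}(I-Q)}$ with spectrum outside $(a-\delta,b+\delta)$, gives
$$
\|(I-Q)P\|\le\frac{\|V\|}{d-\|V\|}<1 .
$$
For the second term $\|(I-P)Q\|$, we interchange the roles of $A$ and $A+V$, writing $A=(A+V)+(-V)$. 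Now $\sigma'\subset[a-\|V\|,b+\|V\|]$ while $\Sigma$ stays outside $(a-d,b+d)$, which leaves a gap $d-\|V\|$ again. Thus $\|(I-P)Q\|\le\|V\|/(d-\|V\|)<1$. If instead $\mathrm{conv.hull}(\Sigma)\cap\sigma=\emptyset$, the same argument runs with $\Sigma$ (resp. $\Sigma'$) placed in the interval and $\sigma$ (resp. $\sigma'$) outside. Both conclusions combine to give $\|P-Q\|\le\|V\|/(d-\|V\|)<1$.

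\emph{Main obstacle.} The delicate point is making sure that, in each of the two applications, one of the two spectral sets is really confined to an interval and the other really lies outside an enlarged interval. This is exactly where the convex hull hypothesis is used, since the $\sin\Theta$ theorem fails for interlaced spectra with this constant. I would check carefully that the perturbed sets $\sigma'$ and $\Sigma'$ inherit the ``interval versus outside'' configuration with gap $d-\|V\|$, and not merely with gap $d-2\|V\|$. If the sharper gap cannot be justified, the fallback is the cruder bound $\|V\|/(d-2\|V\|)$ combined with the norm-continuity of $t\mapsto E_{A+tV}(U_{d/2}(\sigma))$ on $[0,1]$. The proposition is only used in the proof of Lemma \ref{nulli} through the consequence $\mathrm{rank}\,P=\mathrm{rank}\,Q$, so such a weaker route would still serve the paper's purposes.
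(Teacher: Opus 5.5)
Your argument is correct. The paper gives no proof of its own and simply quotes the result from \cite{PAMS}. Your route is essentially the standard proof given there: the identity $\|P-Q\|=\max\{\|P(I-Q)\|,\|(I-P)Q\|\}$, followed by two applications of the Davis--Kahan $\sin\Theta$ theorem with gap $d-\|V\|$, which gives $\|P-Q\|\le \|V\|/(d-\|V\|)<1$. The general case without the convex-hull hypothesis is handled the same way with the extra factor $\pi/2$, which is where the constant $2/(2+\pi)$ comes from.

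The worry in your last paragraph is unfounded. In each of the two estimates, one spectral set is the unperturbed $\sigma$ or $\Sigma$, and only the other has moved, by at most $\|V\|$. So the gap is $d-\|V\|$, exactly as you computed in Step~2, and no fallback is needed. The only correction is in Step~1: use closed rather than open $\|V\|$-neighbourhoods there.
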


\end{document}